\DeclareMathAlphabet{\mathpzc}{OT1}{pzc}{m}{it} 
\newcommand{\B}{\mathbb{B}}
\newcommand{\R}{{\mathbb{R}^n}}
\newtheorem*{ThSN}{Theorem}
\newtheorem{Th}{Theorem}[section]              
\newtheorem{Prop}{Proposition}[section]
\newtheorem{Lem}{Lemma}[section]
\title[Square functions for Schr\"odinger and Laguerre operators in UMD spaces]
      {UMD Banach spaces and square functions associated with heat semigroups for Schr\"odinger and Laguerre operators}
\author{J.J. Betancor}
\author[A.J. Castro]{A.J. Castro}
\author[J.C. Fari\~{n}a]{J.C. Fari\~{n}a}
\author[L. Rodr\'{\i}guez-Mesa]{L. Rodr\'{\i}guez-Mesa}
\address{\newline
        Jorge J. Betancor, Alejandro J. Castro, Juan C. Fari\~na and Lourdes Rodr\'{\i}guez-Mesa \newline
        Departamento de An\'alisis Matem\'atico,
        Universidad de La Laguna, \newline
        Campus de Anchieta, Avda. Astrof\'{\i}sico Francisco S\'anchez, s/n, \newline
        38271, La Laguna (Sta. Cruz de Tenerife), Spain}
\email{jbetanco@ull.es, ajcastro@ull.es, jcfarina@ull.es, lrguez@ull.es}
\subjclass[2010]{46E40, 46B20}
\keywords{$\gamma$-radonifying operators, UMD Banach spaces, Schr\"odinger, Hermite and Laguerre operators, Littlewood-Paley $g$-functions, heat semigroup}
\begin{document}

\footnotetext{Date: \today.}

\maketitle                                  

\begin{abstract}
    In this paper  we define square functions (also called Littlewood-Paley-Stein functions) associated with heat semigroups
    for Schr\"odinger and Laguerre operators acting on functions which take values in UMD Banach spaces. We extend classical (scalar)
    $L^p$-boundedness properties for the square functions to our Banach valued setting by using $\gamma$-radonifying operators.
    We also prove that these $L^p$-boundedness properties of the square functions actually characterize the Banach spaces having the UMD property.
\end{abstract}

\section{Introduction}

Suppose that $(\Omega , \mu)$ is a measure space and $\{T_t\}_{t>0}$ is  an analytic semigroup on $L^p(\Omega , \mu)$, where $1\leq p\leq \infty$.
If $k\in \mathbb{N}$, the $k$-th vertical square function $g^k(\{T_t\}_{t>0})(f)$ of $f\in L^p(\Omega ,\mu )$ is defined by
$$g^k(\{T_t\}_{t>0})(f)(x)
    =\left(\int_0^\infty \left|t^k\partial_t^kT_t(f)(x)\right|^2\frac{dt}{t}\right)^{1/2}.$$
The $L^p$-boundedness properties of $g^k$-square functions are very useful in order to describe the behavior in $L^p$- spaces of multipliers associated
to the infinitesimal generator of the semigroup $\{T_t\}_{t>0}$ (see \cite{Me}, \cite{Ste1} and \cite{Th}).

It is well-known (\cite[p. 120]{Ste1}) that if $\{T_t\}_{t>0}$ is the classical
heat or Poisson semigroup then,
for every $1<p<\infty$,
\begin{equation}\label{gequivf}
    \|g^k(\{T_t\}_{t>0})(f)\|_{L^p(\mathbb{R}^n)}
        \sim \|f\|_{L^p(\mathbb{R}^n)},\quad f\in L^p(\mathbb{R}^n).
\end{equation}
This property can be extended to other semigroups of operators (see \cite{MTX}, \cite{Ste1}, \cite{Th}, \cite{Xu}, amongst others).

In the sequel we denote as usual by $\{W_t\}_{t>0}$ and $\{P_t\}_{t>0}$ the classical heat and Poisson semigroup on $\mathbb{R}^n$, respectively.
We have that, for every $t>0$ and $f\in L^p(\mathbb{R}^n)$, $1\leq p\leq \infty$,
$$W_t(f)(x)
    =\int_{\mathbb{R}^n}\frac{e^{-|x-y|^2/(4t)}}{(4\pi t)^{n/2}}f(y)dy,\quad x\in \mathbb{R}^n,$$
and
$$P_t(f)(x)
    =c_n\int_{\mathbb{R}^n}\frac{t}{(t^2+|x-y|^2)^{(n+1)/2}}f(y)dy,\quad x\in \mathbb{R}^n,$$
being $c_n= \pi^{-(n+1)/2} \Gamma((n+1)/2)$.

If $\psi :\mathbb{R}^n\longrightarrow \mathbb{R}$ is a measurable function on $\mathbb{R}^n$, we define $\psi _t(x)=t^{-n}\psi (x/t)$, $x\in \mathbb{R}^n$
and $t>0$. Then, it is clear that, for every $t>0$, $W_t(f)=G_{\sqrt{t}}*f$ and $P_t(f)=P_t*f$,
where $G(x)=\frac{e^{-|x|^2/4}}{(4\pi)^{n/2}}$, $x\in \mathbb{R}^n$, and $P(x)=\frac{c_n}{(1+|x|^2)^{(n+1)/2}}$, $x\in \mathbb{R}^n$.
We can also write, for every $k\in \mathbb{N}$,
$$g^k(\{W_t\}_{t>0})(f)(x)
    =\sqrt{2}\|\varphi ^k_{\sqrt{t}}*f(x)\|_{L^2\left((0,\infty ),\frac{dt}{t}\right)},$$
where $\varphi ^k(x)=\Big(\partial_t^kG_{\sqrt{t}}(x)\Big)_{\big|t=1}$, $x\in \mathbb{R}^n$, and
$$g^k(\{P_t\}_{t>0})(f)(x)
    = \|\phi^k_t*f\|_{L^2\left((0,\infty ),\frac{dt}{t}\right)},$$
where $\phi^k(x)=\Big(\partial_t^k P_t (x)\Big)_{\big|t=1}$, $x\in \mathbb{R}^n$.

If $\psi$ is good enough the continuous $\psi$-wavelet transform $\mathcal{W}_\psi (f)$ of $f\in L^p(\mathbb{R}^n)$, $1\leq p\leq \infty$, is defined by
$$\mathcal{W}_\psi (f)(x,t)
    =(\psi _t*f)(x),\quad x\in\mathbb{R}^n\mbox{ and }t>0.$$
In \cite{ChFS} (see also \cite{FS}) the authors gave conditions on the function $\psi$ so that the equivalence
\begin{equation}\label{Wequivf}
    \|\mathcal{W}_\psi (f)\|_{L^p\left(\mathbb{R}^n,L^2((0,\infty), \frac{dt}{t})\right)}
        \sim \|f\|_{L^p(\mathbb{R}^n)},
\end{equation}
holds for every $f\in L^p(\mathbb{R}^n)$, $1<p<\infty$. Note that \eqref{Wequivf} can be seen as an extension of \eqref{gequivf}
for the classical heat and Poisson semigroups.

In the last years several  authors (\cite{HTV}, \cite{Hy}, \cite{HNP}, \cite{K}, \cite{KW1}, \cite{MTX} and \cite{Xu})
have dealt with square functions acting on functions which take values in a Banach space.
Suppose that $\mathbb{B}$ is a Banach space and $f:\Omega \longrightarrow \mathbb{B}$ is a $\mu$- strongly measurable function.
The first (and maybe the more natural) definition of $g^k_\mathbb{B}(\{T_t\}_{t>0})(f)$ is the following:
$$g^k_\mathbb{B}(\{T_t\}_{t>0})(f)(x)
    =\left(\int_0^\infty \left\|t^k\partial_t^kT_t(f)(x)\right\|_\mathbb{B}^2 \frac{dt}{t}\right)^{1/2}.$$
This $g^k_\mathbb{B}$-square function was studied for the classical Poisson semigroup on the torus by Xu (\cite{Xu});
for the Poisson semigroup defined by the Ornstein-Uhlenbeck semigroup by Harboure, Torrea and Viviani (\cite{HTV});
for subordinated Poisson semigroups of diffusion semigroups (in the sense of Stein \cite{Ste1})  by Mart\'{\i}nez, Torrea and Xu (\cite{MTX});
and for Poisson semigroups associated with Schr\"odinger operators by Torrea and Zhang (\cite{TZ}). From the results in \cite{MTX} and \cite{Xu}
we can deduce the following.
\begin{ThSN}
    Let $\mathbb{B}$ be a Banach space and $1<p<\infty$. Then, the following assertions are equivalent.

    $(i)$ $\mathbb{B}$ is isomorphic to a Hilbert space.

    $(ii)$ For every $f\in L^p(\mathbb{R}^n,\mathbb{B})$,
    $$\|g_\mathbb{B}^1(\{P_t\}_{t>0})(f)\|_{L^p(\mathbb{R}^n)}
        \sim \|f\|_{L^p(\mathbb{R}^n,\mathbb{B})}. $$
\end{ThSN}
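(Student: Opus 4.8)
The plan is to split the two-sided estimate hidden in $(ii)$ into its upper and lower halves and to recognize each half as a known characterization of a martingale cotype/type property of $\mathbb{B}$, after which Kwapie\'n's theorem closes the loop. First I would record that the classical Poisson semigroup $\{P_t\}_{t>0}$ on $\mathbb{R}^n$ is a symmetric diffusion semigroup in the sense of Stein \cite{Ste1} ($P_t$ is convolution with a probability density, hence positivity preserving and contractive on every $L^p$, it is self-adjoint because the Poisson kernel is even, and $P_t1=1$), so that the vector-valued Littlewood--Paley--Stein theory developed in \cite{MTX} (and, on the torus, in \cite{Xu}) applies to it. According to that theory, for a fixed $1<p<\infty$: the upper estimate $\|g_\mathbb{B}^1(\{P_t\}_{t>0})(f)\|_{L^p(\mathbb{R}^n)}\lesssim\|f\|_{L^p(\mathbb{R}^n,\mathbb{B})}$ holds if and only if $\mathbb{B}$ has martingale cotype $2$, and the reverse estimate $\|f\|_{L^p(\mathbb{R}^n,\mathbb{B})}\lesssim\|g_\mathbb{B}^1(\{P_t\}_{t>0})(f)\|_{L^p(\mathbb{R}^n)}$ holds if and only if $\mathbb{B}$ has martingale type $2$. (These two statements are dual to each other under $p\leftrightarrow p'$ and $\mathbb{B}\leftrightarrow\mathbb{B}^*$, through the polarization identity $\int_{\mathbb{R}^n}\langle f,g\rangle\,dx=4\int_{\mathbb{R}^n}\int_0^\infty\langle t\partial_tP_tf(x),t\partial_tP_tg(x)\rangle\,\frac{dt}{t}\,dx$ coming from Plancherel's theorem, together with the fact that $\mathbb{B}$ has martingale type $2$ precisely when $\mathbb{B}^*$ has martingale cotype $2$.)

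For the implication $(i)\Rightarrow(ii)$ I would argue that if $\mathbb{B}$ is isomorphic to a Hilbert space then it has simultaneously martingale type $2$ and martingale cotype $2$ (trivially, with constants comparable to the isomorphism constant), so both halves of $(ii)$ follow from the quoted results. I would also remark that this direction admits a self-contained proof independent of \cite{MTX}: for a Hilbert space $H$ the operator $f\mapsto(t\partial_tP_tf)_{t>0}$ has kernel $t\partial_tP_t(x-y)$, whose norm in $L^2((0,\infty),\frac{dt}{t})$, as a function of $x-y$, is of Calder\'on--Zygmund size $|x-y|^{-n}$ with matching gradient bounds, and which is bounded on $L^2(\mathbb{R}^n,H)$ by Plancherel; vector-valued Calder\'on--Zygmund theory — especially clean here because both the domain $H$ and the target $L^2((0,\infty),\frac{dt}{t})\otimes H$ are Hilbert spaces — then yields the upper $L^p$-bound, and the lower $L^p$-bound follows from it by duality and the polarization identity above.

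For the implication $(ii)\Rightarrow(i)$ I would read the same characterizations in the other direction: the upper half of $(ii)$ forces $\mathbb{B}$ to have martingale cotype $2$ and the lower half forces $\mathbb{B}$ to have martingale type $2$; by Pisier's inequalities these entail that $\mathbb{B}$ has Rademacher cotype $2$ and Rademacher type $2$, respectively. Kwapie\'n's theorem then asserts that a Banach space having both type $2$ and cotype $2$ is isomorphic to a Hilbert space, which is exactly $(i)$.

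The step I expect to be the main obstacle is in fact not present in this argument at all: all of the analytic content is imported from \cite{MTX} and \cite{Xu}, and the work here is purely one of assembling those characterizations with the Pisier and Kwapie\'n theorems. Were one to demand a self-contained treatment, the genuinely hard part would be the lower (reverse) inequality in $(ii)$: showing that it forces martingale type $2$ requires transferring the square-function data onto a discrete martingale adapted to the subordination structure of $\{P_t\}_{t>0}$, and showing conversely that martingale type $2$ yields the lower bound needs an honest reverse Littlewood--Paley inequality, in contrast with the comparatively soft Calder\'on--Zygmund argument that already suffices for the upper bound; the implication $(i)\Rightarrow(ii)$ is, by comparison, routine.
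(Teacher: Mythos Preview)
Your proposal is correct and matches precisely what the paper intends. The paper itself does not supply a proof of this statement: it is presented in the introduction as a corollary, prefaced only by ``From the results in \cite{MTX} and \cite{Xu} we can deduce the following,'' and nothing further is said. Your argument---splitting $(ii)$ into its upper and lower halves, identifying each with the martingale cotype~$2$ and martingale type~$2$ characterizations proved in \cite{MTX} (the classical Poisson semigroup being a subordinated diffusion semigroup, so their theory applies), and then closing with Kwapie\'n's theorem---is exactly the deduction the authors have in mind, so there is nothing to compare beyond noting that you have spelled out what the paper leaves implicit.
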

Other authors (\cite{Hy}, \cite{HNP}, \cite{K} and \cite{KW1}) have extended the definition of the $g$-square functions to a
Banach valued setting by different points of view. As one of their goals, they wanted to extend the equivalence in \eqref{gequivf} to
Banach spaces which are not isomorphic to Hilbert spaces. Hyt\"onen \cite{Hy} extended \eqref{gequivf} to a UMD Banach space setting by using
Banach-valued stochastic integration. On the other hand, Kaiser and Weis \cite{KW1} generalized \eqref{Wequivf} to functions taking values
in UMD Banach spaces by using $\gamma$-radonifying operators. These two approaches are closely connected
(see, for instance, \cite{NVW} and \cite{NeWe}).
In this paper we use $\gamma$-radonifying operators to study $g$-square functions associated with the heat semigroups for Schr\"odinger
and Laguerre operators in UMD Banach spaces.

The main properties of UMD Banach spaces can be encountered in \cite{Bou}, \cite{Bu3} and \cite{Rub}.

Suppose that $H$ is a separable Hilbert space and $\mathbb{B}$ is a real Banach space.
We take a sequence $(\gamma _k)_{k\in \mathbb{N}}$ of independent standard Gaussians.
We say that an operator $T$ bounded from $H$ into $\mathbb{B}$, shortly $T\in L(H,\mathbb{B})$, is $\gamma$-radonifying,
written $T\in \gamma (H,\mathbb{B})$, when
$$\|T\|_{\gamma (H,\mathbb{B})}
    =\left(\mathbb{E}\left\|\sum_{k=1}^\infty \gamma _kT(h_k)\right\|_\mathbb{B}^2\right)^{1/2}<\infty,$$
where $\{h_k\}_{k\in \mathbb{N}}$ is an orthonormal basis in $H$. If $\mathbb{B}$ is a Banach space not containing a copy of $c_0$ (that is the case of UMD spaces), then
\begin{equation}\label{normsup}
\|T\|_{\gamma (H,\mathbb{B})}
    =\sup \left(\mathbb{E}\left\|\sum_{k=1}^\infty \gamma _kT(h_k)\right\|_\mathbb{B}^2\right)^{1/2},
\end{equation}
where the supremum is taken over all the finite families $\{h_k\}$ of orthonormal functions in $H$ (\cite[Theorem 5.9]{Nee}). 
In the sequel by $H$ we denote the space $L^2((0,\infty ),dt/t)$.\\

If $f:(0,\infty )\longrightarrow \mathbb{B}$ is a strongly $\mu$-measurable function such that,
for every $L\in \mathbb{B}^*$, $L\circ f\in H$, then there exists $T_f\in L(H,\mathbb{B})$ such that
$$\langle L,T_f(h)\rangle
    =\int_0^\infty \langle L,f(t)\rangle_{\B^*,\B} h(t)\frac{dt}{t},\quad h\in H\mbox{ and }L\in \mathbb{B}^*.$$
We say that $f\in \gamma ((0,\infty ),dt/t,\mathbb{B})$ provided that $T_f\in \gamma (H,\mathbb{B})$.
We identify $f$ with $T_f$.
If $\mathbb{B}$ does not contain a copy of $c_0$ then
$\gamma ((0,\infty ),dt/t,\mathbb{B})$ is a dense subspace of $\gamma (H,\mathbb{B})$ (\cite[Remark 2.16]{KW1}). 
In the sequel we assume that $\mathbb{B}$ is UMD. Then, $\mathbb{B}$ does not contain a copy of $c_0$.

In \cite[Theorem 4.2]{KW1} Kaiser and Weis gave conditions over the function $\psi$ in order to the wavelet transform
$\mathcal{W}_\psi$ satisfies the following equivalence:
\begin{equation}\label{WequivfB}
    \|\mathcal{W}_\psi (f)\|_{L^p(\mathbb{R}^n,\gamma (H,\mathbb{B}))}\sim \|f\|_{L^p(\mathbb{R}^n,\mathbb{B})},
\end{equation}
for every $f\in L^p(\mathbb{R}^n,\mathbb{B})$ and $1<p<\infty$. Note that, since $\gamma (H,\mathbb{C})=H$, \eqref{WequivfB} reduces
to \eqref{Wequivf} when $\mathbb{B}=\mathbb{C}$. Then, \eqref{WequivfB} can be seen as an extension of \eqref{gequivf} when we consider
the classical heat or Poisson semigroups and functions taking values in a UMD Banach space.

In this paper we extend the equivalence \eqref{gequivf} to a UMD-Banach valued setting for the heat semigroup defined by
Schr\"odinger operator in $\mathbb{R}^n$, $n\geq 3$, the Hermite operator on $\mathbb{R}^n$, $n\geq 1$, and the Laguerre operator on $(0,\infty)$.
Then, we prove that these new equivalences allow us to characterize the UMD Banach spaces.

The Schr\"odinger operator $\mathcal{L}$ is defined by $\mathcal{L}=-\Delta +V$ in $\mathbb{R}^n$, $n\geq 3$,
where $\Delta$ is the Euclidean Laplacian in $\mathbb{R}^n$ and $V$ is a nonnegative measurable function in $\mathbb{R}^n$.
Here we assume that $V\in RH_s(\mathbb{R}^n)$, that is, $V$ satisfies the following $s$-reverse H\"older's inequality: there exists
$C>0$ such that, for every ball $B$ in $\mathbb{R}^n$,
\begin{equation}\label{RH}
    \left(\int_BV(x)^sdx\right)^{1/s}\leq C\int_BV(x)dx,
\end{equation}
where $s>n/2$.
If $E_\mathcal{L}$ represents the spectral measure associated with the operator $\mathcal{L}$, the heat semigroup of operators generated by
$-\mathcal{L}$ is denoted by $\{W_t^\mathcal{L}\}_{t>0}$, where
$$W_t^\mathcal{L}(f)
    =\int_{[0,\infty )}e^{-\lambda t}E_\mathcal{L}(d\lambda )f,\quad f\in L^2(\mathbb{R}^n).$$
We can write, for every $f\in L^2(\mathbb{R}^n)$,
\begin{equation}\label{WSchrodinger}
    W_t^\mathcal{L}(f)(x)=\int_{\mathbb{R}^n}W_t^\mathcal{L}(x,y)f(y)dy,\quad x\in \mathbb{R}^n\mbox{ and }t>0.
\end{equation}
The main properties of the kernel function $W_t^\mathcal{L}(x,y)$, $t>0$, $x,y\in \mathbb{R}^n$, can be encountered in
\cite{DGMTZ} and \cite{Sh}. Also, for every $t>0$, the operator $W_t^\mathcal{L}$ defined in \eqref{WSchrodinger} is bounded from
$L^p(\mathbb{R}^n)$ into itself, $1\leq p\leq \infty$. Thus, $\{W_t^\mathcal{L}\}_{t>0}$ is a positive semigroup of contractions in
$L^p(\mathbb{R}^n)$, $1\leq p\leq \infty$.

The Hermite (also called harmonic oscillator) operator $\mathcal{H}=-\Delta +|x|^2$ is a special case of the Schr\"odinger operator.
Here we consider $\mathcal{H}$ on $\mathbb{R}^n$, with $n\geq 1$. We define, for every $k\in \mathbb{N}$, the $k$-th  Hermite function ${\mathfrak{h}}_k$ by
$${\mathfrak{h}}_{k}(x)
    =(\sqrt{\pi}2^kk!)^{-1/2}e^{-x^2/2}H_k(z),\quad x\in\mathbb{R},$$
where by $H_k$ we denote the $k$-th Hermite polynomial (\cite[pp. 105--106]{Sz}).
If $k=(k_1,...,k_n)\in \mathbb{N}^n$ the $k$-th Hermite function ${\mathfrak{h}}_k$ is defined by
$${\mathfrak{h}}_k(x)
    =\prod_{j=1}^n{\mathfrak{h}}_{k_j}(x_j), \quad x=(x_1,...,x_n)\in \mathbb{R}^n.$$
The system  $\{{\mathfrak{h}}_k\}_{k\in \mathbb{N}^n}$ is orthonormal and complete in $L^2(\mathbb{R}^n)$.
Moreover, $\mathcal{H}{\mathfrak{h}}_k=(2|k|+n){\mathfrak{h}}_k$, where $|k|=k_1+...+k_n$ and $k=(k_1,...,k_n)\in \mathbb{N}^n$.
The operator $-\mathcal{H}$ generates in $L^2(\mathbb{R}^n)$ the semigroup of operators $\{W_t^\mathcal{H}\}_{t>0}$ where, for every $t>0$,
$$W_t^\mathcal{H}(f)
    =\sum_{k\in \mathbb{N}^n}e^{-t(2|k|+n)}c_k(f){\mathfrak{h}}_k,\quad f  \in L^2(\mathbb{R}^n),$$
being
$$c_k(f)
    =\int_{\mathbb{R}^n}{\mathfrak{h}}_k(y)f(y)dy,\quad k\in \mathbb{N}^n\mbox{ and }f\in L^2(\mathbb{R}^n).$$
According to the Mehler's formula (\cite[(1.1.36)]{Th}) we can write, for every $t>0$,
\begin{equation}\label{WHermite}
    W_t^\mathcal{H}(f)(x)=\int_{\mathbb{R}^n}W_t^\mathcal{H}(x,y)f(y)dy,\quad f\in L^2(\mathbb{R}^n,\mathbb{B}),
\end{equation}
where, for each $x,y\in \mathbb{R}^n$ and $t>0$,
$$W_t^\mathcal{H}(x,y)
    =\frac{1}{\pi ^{n/2}}\left(\frac{e^{-2t}}{1-e^{-4t}}\right)^{n/2}\exp\left[-\frac{1}{4}\left(|x-y|^2\frac{1+e^{-2t}}{1-e^{-2t}}+|x+y|^2\frac{1-e^{-2t}}{1+e^{-2t}}\right)\right].$$
By defining $W_t^\mathcal{H}$, for every $t>0$, on $L^p(\mathbb{R}^n)$, $1\leq p\leq \infty$, by means of \eqref{WHermite},
then the system $\{W_t^\mathcal{H}\}_{t>0}$ is a positive semigroup of contractions in $L^p(\mathbb{R}^n)$, $1\leq p\leq \infty$.

Since $\{W_t^\mathcal{L}\}_{t>0}$ and $\{W_t^\mathcal{H}\}_{t>0}$ are positive, they have tensor extensions to $L^p(\mathbb{R}^n,\mathbb{B})$
satisfying the same $L^p$-boundedness properties.

If $\ell=1,2$ and $f\in L^p(\mathbb{R}^n,\mathbb{B})$, $1<p<\infty$, we define
$$\mathcal{G}_{\mathcal{L},\mathbb{B}}^\ell (f)(x,t)
    =t^\ell \partial_t^\ell W_t ^\mathcal{L}(f)(x),\quad x\in \mathbb{R}^n, \ t>0, \ n\geq 3,$$
and
$$\mathcal{G}_{\mathcal{H},\mathbb{B}}^\ell(f)(x,t)
    =t^\ell \partial_t^\ell W_t ^\mathcal{H}(f)(x),\quad x\in \mathbb{R}^n, \ t>0, \ n\geq 1.$$

Let $\alpha >-1/2$. The Laguerre operator $\mathcal{L}_\alpha$ is defined by
$$\mathcal{L}_\alpha
    =\frac{1}{2}\left(-\frac{d^2}{dx^2}+x^2+\frac{\alpha ^2-1/4}{x^2}\right), \quad x\in (0,\infty ).$$

If $k\in \mathbb{N}$ we consider the $k$-th Laguerre function
$$\varphi _k^\alpha (x)
    =\Big(\frac{2\Gamma (k+1)}{\Gamma (k+\alpha +1)}\Big)^{1/2}e^{-x^2/2}x^{\alpha +1/2}L_k^\alpha (x^2), \quad x\in (0,\infty ),$$
where $L_k^\alpha$ represents the $k$-th Laguerre polynomial (\cite[pp. 100--102]{Sz}).
The family $\{\varphi _k^\alpha \}_{k\in \mathbb{N}}$
is orthonormal and complete in $L^2(0,\infty)$.
Moreover, for every $k\in \mathbb{N}$,
$$\mathcal{L}_\alpha \varphi _k^\alpha
    =(2k+\alpha +1)\varphi _k^\alpha .$$
The semigroup of operators $\{W_t^{\mathcal{L}_\alpha}\}_{t>0}$ generated by $-\mathcal{L}_\alpha$ in $L^2(0,\infty )$ is defined by
$$W_t^{\mathcal{L}_\alpha }(f)
    =\sum_{k=0}^\infty e^{-t(2k+\alpha +1)}c_k^\alpha (f)\varphi _k^\alpha ,\quad t>0 \mbox{ and }f  \in L^2(0,\infty ),$$
where $c_k^\alpha (f)=\int_0^\infty \varphi _k^\alpha (y)f(y)dy$, $k\in \mathbb{N}$.

\noindent According to the Mehler's formula (\cite[(1.1.47)]{Th}) we can write, for every $t>0$,
\begin{equation}\label{WLaguerre}
    W_t^{\mathcal{L}_\alpha }(f)(x)
        =\int_0^\infty W_t^\alpha (x,y)f(y)dy,\quad f\in L^2(0,\infty),
\end{equation}
where, for each $x,y,t\in (0,\infty)$
$$W_t^\alpha (x,y)
    =\left(\frac{2e^{-t}}{1-e^{-2t}}\right)^{1/2}\left(\frac{2xye^{-t}}{1-e^{-2t}}\right)^{1/2}I_\alpha \left(\frac{2xye^{-t}}{1-e^{-2t}}\right)
    \exp\left[-\frac{1}{2}(x^2+y^2)\frac{1+e^{-2t}}{1-e^{-2t}}\right],$$
and $I_\alpha$ denotes the modified Bessel function of the first kind and order $\alpha$.

If we define, for every $t>0$, $W_t^{\mathcal{L}_\alpha}$ on $L^p(0,\infty )$, $1\leq p\leq \infty$ by \eqref{WLaguerre},
then $\{W_t^{\mathcal{L}_\alpha}\}_{t>0}$ is a positive semigroup of contractions in $L^p(0,\infty )$, $1\leq p\leq \infty$.
Moreover, for every $t>0$, $W_t^{\mathcal{L}_\alpha }$ can be extended to $L^p((0,\infty ),\mathbb{B})$ preserving the $L^p$-boundedness properties.

If $\ell=1,2$ we consider
$$\mathcal{G}_{\mathcal{L}_\alpha , \mathbb{B}}^\ell (f)(x,t)
    =t^\ell \partial_t^\ell W_t^{\mathcal{L}_\alpha}(f)(x), \quad x,t\in (0,\infty),$$
for every $f\in L^p((0,\infty ),\mathbb{B})$, $1<p<\infty$.

We now establish the main result of this paper.

\begin{Th}\label{mean}
    Let $\mathbb{B}$ be a Banach space and $\alpha>-1/2$. The following assertions are equivalent.

    $(a)$ $\mathbb{B}$ is UMD.

    $(b)$ For $\ell =1,2$ and for every (equivalently, for some) $1<p<\infty$,
    $$\|\mathcal{G}_{\mathcal{H},\mathbb{B}}^\ell (f)\|_{L^p(\mathbb{R}^n,\gamma (H,\mathbb{B}))}
        \sim \|f\|_{L^p(\mathbb{R}^n,\mathbb{B})},\quad f\in L^p(\mathbb{R}^n,\mathbb{B}),\;n\geq 1.$$

    $(c)$ For $\ell =1,2$ and for every (equivalently, for some) $1<p<\infty$,
    $$\|\mathcal{G}_{\mathcal{L},\mathbb{B}}^\ell (f)\|_{L^p(\mathbb{R}^n,\gamma (H,\mathbb{B}))}
        \sim \|f\|_{L^p(\mathbb{R}^n,\mathbb{B})},\quad f\in L^p(\mathbb{R}^n,\mathbb{B}),\;n\geq 3.$$

    $(d)$ For $\ell =1,2$ and for every (equivalently, for some) $1<p<\infty$,
    $$\|\mathcal{G}_{\mathcal{L}_\alpha,\mathbb{B}}^\ell(f)\|_{L^p((0,\infty),\gamma (H,\mathbb{B}))}
        \sim \|f\|_{L^p((0,\infty ),\mathbb{B})},\quad f\in L^p((0,\infty),\mathbb{B}).$$
\end{Th}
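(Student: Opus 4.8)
\emph{Strategy.} It suffices to prove, separately for $\mathcal{H}$, $\mathcal{L}$ and $\mathcal{L}_\alpha$, that $(a)$ implies the corresponding square function estimate and conversely; a cyclic argument is unnatural since $(b)$, $(c)$, $(d)$ are statements on different spaces. The reference point is the classical heat semigroup: taking $\psi=\varphi^\ell$, a Schwartz function with $\int_{\mathbb{R}^n}\psi=0$ and hence an admissible wavelet for \cite[Theorem 4.2]{KW1}, and using $t^\ell\partial_t^\ell W_t(f)=\sqrt2\,\varphi^\ell_{\sqrt t}*f$ together with the isometry $t\mapsto\sqrt t$ of $H$, the equivalence \eqref{WequivfB} reads, for $\mathbb{B}$ UMD, $\|t^\ell\partial_t^\ell W_t(f)\|_{L^p(\mathbb{R}^n,\gamma(H,\mathbb{B}))}\sim\|f\|_{L^p(\mathbb{R}^n,\mathbb{B})}$; conversely this equivalence is known to characterize the UMD property (by subordination to the classical Poisson semigroup and the Hilbert transform characterization of UMD; see \cite{Hy}). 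I would record this as a lemma, used at both ends.

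\emph{Proof of $(a)\Rightarrow(b),(c),(d)$.} Fix $\ell\in\{1,2\}$ and let $L$ be one of the three operators, with kernel $K_t^L(x,y)$ of $t^\ell\partial_t^\ell W_t^L$ obtained from the Mehler-type formulas \eqref{WHermite}, \eqref{WLaguerre} (and, for $\mathcal{L}$, from the size and derivative bounds of $W_t^{\mathcal{L}}(x,y)$ in \cite{DGMTZ}, \cite{Sh} involving the critical radius function). The upper estimate is obtained by comparison with the classical heat semigroup. For $\mathcal{L}$, Duhamel's formula $W_t-W_t^{\mathcal{L}}=\int_0^tW_s^{\mathcal{L}}\,V\,W_{t-s}\,ds$ together with $V\in RH_s$ shows that $f\mapsto\big(t^\ell\partial_t^\ell(W_t-W_t^{\mathcal{L}})f\big)_{t>0}$ has a $\gamma(H,\mathbb{B})$-valued Calder\'on--Zygmund kernel, so this difference operator is bounded on $L^p(\mathbb{R}^n,\mathbb{B})$, $1<p<\infty$, by the vector-valued Calder\'on--Zygmund theorem (here one uses that UMD spaces have finite cotype and \eqref{normsup}); combined with the model estimate, this controls $\mathcal{G}^\ell_{\mathcal{L},\mathbb{B}}(f)$. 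For $\mathcal{H}$ and $\mathcal{L}_\alpha$ I split the $t$-integral at $t=1$: on $(0,1)$ one compares $K_t^L$ with the classical kernel (a Taylor expansion of the Mehler kernels at $t=0$ yields the requisite Calder\'on--Zygmund estimates for the difference), while on $(1,\infty)$ the strictly positive bottom of the spectrum gives $\|t^\ell\partial_t^\ell W_t^L\|_{L^p(\mathbb{R}^n,\mathbb{B})\to L^p(\mathbb{R}^n,\mathbb{B})}\le Ce^{-\varepsilon t}$, and combining this decay with the $\gamma$-boundedness of the family $\{t^\ell\partial_t^\ell W_t^L\}_{t>0}$ on $L^p(\mathbb{R}^n,\mathbb{B})$ (a consequence of the kernel bounds), a $\gamma$-multiplier estimate and the triangle inequality for the $\gamma$-norm over $(1,\infty)=\bigcup_{k\ge1}[k,k+1)$ bounds the tail by $C\|f\|_{L^p(\mathbb{R}^n,\mathbb{B})}$. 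For the lower estimate I would use the polarization identity $\langle f,g\rangle=c_\ell\int_0^\infty\langle t^\ell\partial_t^\ell W_t^Lf,\,t^\ell\partial_t^\ell W_t^Lg\rangle\,\frac{dt}{t}$ (valid since $\int_0^\infty(\lambda t)^{2\ell}e^{-2\lambda t}\,\frac{dt}{t}$ does not depend on $\lambda>0$), the already-proved upper estimate for the dual operator on $L^{p'}(\mathbb{R}^n,\mathbb{B}^*)$ (with $\mathbb{B}^*$ UMD and $\gamma(H,\mathbb{B})^*=\gamma(H,\mathbb{B}^*)$ under the trace pairing), and a density argument. Alternatively, one may observe that $\mathcal{H}$, $\mathcal{L}$, $\mathcal{L}_\alpha$ have a bounded $H^\infty$ functional calculus on $L^p(\mathbb{R}^n,\mathbb{B})$ and apply the Kalton--Weis square function theorem to $\psi(z)=z^\ell e^{-z}$, since $\psi(tL)=(-1)^\ell t^\ell\partial_t^\ell W_t^L$.

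\emph{Proof of $(b),(c),(d)\Rightarrow(a)$.} Each is reduced to the lemma, conveniently via a dilation argument. For $\mathcal{L}=-\Delta+V$, conjugating by $\mathcal{D}_\lambda f=f(\lambda\,\cdot)$ turns $\mathcal{L}$ into $\lambda^2(-\Delta+V_\lambda)$ with $V_\lambda=\lambda^{-2}V(\cdot/\lambda)$, the factor $\lambda^2$ being absorbed by a time rescaling (a $\gamma$-isometry); hence $(c)$ holds for $-\Delta+V_\lambda$ with the same constants for every $\lambda>0$, and since $s>n/2$ one has $V_\lambda\to0$ in $L^s_{\mathrm{loc}}$ as $\lambda\to\infty$, so $W_t^{-\Delta+V_\lambda}\to W_t$ (via Duhamel) and the estimate passes to the limit, yielding the model estimate and hence $(a)$. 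The same applies to $\mathcal{H}$ (there $V=|x|^2$ scales to $\lambda^{-4}|x|^2\to0$). For $\mathcal{L}_\alpha$ the term $(\alpha^2-1/4)/x^2$ is scale invariant, so the dilation instead reduces $(d)$ to the analogous square function estimate for the Bessel operator $\mathcal{B}_\alpha=\frac12\big(-d^2/dx^2+(\alpha^2-1/4)/x^2\big)$ on $(0,\infty)$; localizing away from $x=0$, where $(\alpha^2-1/4)/x^2$ is a harmless smooth perturbation of $-d^2/dx^2$, one recovers the one-dimensional classical estimate, and again $(a)$ follows (the Bessel case being itself a characterization of UMD, via the connection between the Bessel Riesz transform and the Hilbert transform).

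\emph{Final remarks and the main obstacle.} The equivalence ``for some $p$'' $\Leftrightarrow$ ``for every $p$'' and the interchange of $\ell=1$ and $\ell=2$ follow once the operators in play are identified as vector-valued Calder\'on--Zygmund operators bounded on one $L^{p_0}(\mathbb{R}^n,\mathbb{B})$. The delicate part is the converse: recovering the UMD property---an intrinsically flat, Hilbert-transform phenomenon---from an estimate attached to a variable-coefficient operator, the more so for $\mathcal{L}_\alpha$ because of the inverse-square singularity at the origin. The classical (and Bessel) $g$-function is a genuine singular integral whose kernel has critical, non-integrable decay ($\|K_t(x,y)\|_H\sim|x-y|^{-n}$), so the localization cannot be carried out by naive spatial truncation; one must work at the level of suitable bilinear forms (or pass through the associated Riesz transforms) and treat the neighborhood of $x=0$ for $\mathcal{L}_\alpha$ separately. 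On the forward side the analogous nuisance is the long-time regime, where the $\gamma(H,\mathbb{B})$-norm is not dominated by the $L^2((0,\infty),dt/t;\mathbb{B})$-norm, which forces the use of $\gamma$-boundedness together with the spectral gap of $\mathcal{H}$ and $\mathcal{L}_\alpha$ (respectively the critical-radius decay for $\mathcal{L}$).
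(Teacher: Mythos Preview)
Your forward direction is close in spirit to the paper's---both reduce to the classical heat square function via \cite[Theorem 4.2]{KW1} and control the difference by kernel comparison---but the decompositions differ. The paper does not split in $t$ at all; it splits \emph{spatially} into local and global parts via the critical radius $\rho(x)$ (which for $\mathcal{H}$ is $\sim 1/(1+|x|)$), shows that $\mathcal{G}^\ell_{\mathcal{H},\mathrm{loc}}-\mathcal{G}^\ell_{-\Delta,\mathrm{loc}}$ and $\mathcal{G}^\ell_{\mathcal{H},\mathrm{glob}}$ are each pointwise dominated by the Hardy--Littlewood maximal function of $\|f\|_\mathbb{B}$, and handles $\mathcal{G}^\ell_{-\Delta,\mathrm{loc}}$ by a covering argument. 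For $\mathcal{L}_\alpha$ the paper compares directly with the one-dimensional Hermite square function $\mathcal{G}^\ell_{\mathcal{H}/2}$ (not with $-\Delta$), using the factorization $W_t^{\mathcal{L}_\alpha}(x,y)=W_t^{\mathcal{H}/2}(x,y)\,g_\alpha(\xi)$ and the asymptotics of $g_\alpha$. Your temporal split is a reasonable alternative, but note that your handling of the tail $t>1$ is not quite right: the ``triangle inequality for the $\gamma$-norm over $(1,\infty)=\bigcup_k[k,k+1)$'' fails in general, since for an orthogonal decomposition $H=\bigoplus H_k$ one only has $\|T\|_{\gamma(H,\mathbb{B})}\le C(\sum_k\|T|_{H_k}\|_{\gamma(H_k,\mathbb{B})}^2)^{1/2}$ when $\mathbb{B}$ has type~2, which UMD does not guarantee. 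The paper's maximal-function bounds bypass this issue entirely.

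The converse direction is where your proposal diverges substantially, and where it has a genuine gap. The paper does \emph{not} pass to the classical semigroup by dilation and a limiting argument. Instead it goes through imaginary powers: the key step (Proposition~\ref{G1G2} and its analogues) is the pointwise factorization
\[
\mathcal{G}^1_{L,\mathbb{B}}(L^{i\gamma}f)(x,\cdot)\;=\;U\,\mathcal{G}^2_{L,\mathbb{B}}(f)(x,\cdot)\,T_\gamma
\]
as elements of $\gamma(H,\mathbb{B})$, where $U=-\mathrm{Id}_\mathbb{B}$ and $T_\gamma h(t)=\frac{1}{t}\int_0^t\frac{(t-s)^{-i\gamma}}{\Gamma(1-i\gamma)}h(s)\,ds$ is bounded on $H$; the ideal property of $\gamma$-radonifying operators then gives $\|\mathcal{G}^1_{L,\mathbb{B}}(L^{i\gamma}f)(x,\cdot)\|_{\gamma}\le C\|\mathcal{G}^2_{L,\mathbb{B}}(f)(x,\cdot)\|_{\gamma}$. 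Combining this with the assumed two-sided estimate yields boundedness of $L^{i\gamma}$ on $L^p(\mathbb{R}^n,\mathbb{B})$, and one concludes by the characterization of UMD via imaginary powers of $\mathcal{H}$, $\mathcal{L}$, $\mathcal{L}_\alpha$ proved in \cite{BCCR,BCFR}. Your dilation route, by contrast, requires justifying that the two-sided $\gamma$-estimate for $-\Delta+V_\lambda$ survives the limit $\lambda\to\infty$; this demands convergence of $\mathcal{G}^\ell_{-\Delta+V_\lambda,\mathbb{B}}(f)$ to $\mathcal{G}^\ell_{-\Delta,\mathbb{B}}(f)$ in $L^p(\mathbb{R}^n,\gamma(H,\mathbb{B}))$, which is not immediate from Duhamel alone (you need uniform $\gamma$-valued kernel control, not just pointwise convergence). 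For $\mathcal{L}_\alpha$ the situation is worse: as you yourself note, the scale-invariant term $(\alpha^2-1/4)/x^2$ does not disappear, and the subsequent ``localization away from $x=0$'' to recover the flat estimate is left as an obstacle rather than an argument. The paper's imaginary-power route avoids all of this: no limit is taken, and the Laguerre case needs nothing beyond the same algebraic identity in $\gamma(H,\mathbb{B})$.
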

Note that, since $\gamma (H,\mathbb{C})=H$, the equivalences in Theorem~\ref{mean}, ({\it b}), ({\it c}) and ({\it d})
are Banach valued versions of the corresponding scalar equivalences
(see  \cite{BMR}, \cite{Tang}, \cite[Chapter 4]{Th} and \cite{Wrob}).

In \cite{BCCFR} we study square functions associated to the subordinated Poisson semigroup for the Hermite operator in a Banach valued setting.
By using auxiliar operators and Cauchy-Riemann type equations adapted to the Hermite setting we characterized the UMD Banach spaces.
We remark that, as it can be observed in \cite{Hy}, \cite{MTX} and \cite{Xu}, in order to describe geometric properties of Banach spaces
(UMD, $q$-martingale type and cotype,...) by using square functions, subordinated (Poisson) diffusion semigroups must be considered.
Moreover, in \cite{Hy}, Hyt\"onen dealt with diffusion semigroups and the semigroups $\{W_t^\mathcal{H}\}_{t>0}$, $\{W_t^\mathcal{L}\}_{t>0}$
 and $\{W_t^{\mathcal{L}_\alpha }\}_{t>0}$ are not diffusion semigroups because they are not conservative.
Then, in particular the results in \cite{BCCFR} are not covered by the ones in \cite{Hy}. The results obtained by Hyt\"onen for general diffusion semigroups
in a UMD setting are weaker than the ones got for subordinated diffusion semigroups (\cite[Theorem 5.1]{Hy}). In order to get a better result for every
diffusion semigroups Hyt\"onen reduced the admisible class of Banach spaces. He considered the class of Banach spaces which are isomorphic to a closed
subspace of a  complex interpolation space $[Z,Y]_\theta$ where $Z$ is a Hilbert space, $Y$ is a UMD Banach space and $0<\theta <1$. We write $\zeta$
to refer this class of Banach spaces. $\zeta$ contains all the standard UMD spaces.  In \cite{Rub} Rubio de Francia posed the question whether the
equality $\zeta = \ $UMD holds. As far as we know this question remains open.

In contrast with the results in \cite{Hy}  we get Theorem~\ref{mean} for the semigroups
$\{W_t^\mathcal{H}\}_{t>0}$, $\{W_t^\mathcal{L}\}_{t>0}$ and $\{W_t^{\mathcal{L}_\alpha }\}_{t>0}$ which are not
diffusion semigroups and, as it was above mentioned, they are not conservative. In order to prove Theorem~\ref{mean}
we use a procedure different to the one  used in \cite{Hy}. For establishing that if $\mathbb{B}$ is a UMD Banach space the  equivalences
in ({\it b}), ({\it c}) and ({\it d}) hold, we take advantage of the following fact: close to singularities, our operators are good perturbations
of the corresponding operators associated with the Laplacian operator. The exact meaning of this idea is clear in the proof. Then, we use
\cite[Theorem 4.2]{KW1}. To see that the equivalences in ({\it b}), ({\it c}) and ({\it d}) imply that $\mathbb{B}$
is UMD, we have taken into account that the UMD Banach spaces are characterized by the $L^p$-boundedness properties of the imaginary powers
$\mathcal{H}^{i\gamma}$, $\mathcal{L}^{i\gamma }$,  $\mathcal{L}_\alpha ^{i\gamma}$, $\gamma >0$ of $\mathcal{H}$, $\mathcal{L}$  and
$\mathcal{L}_\alpha $, respectively (\cite[Theorem 1.2]{BCCR} and \cite[Theorem 3]{BCFR}).

In the next sections we prove our result for the Hermite operator in $\mathbb{R}^n$, $n\geq 1$ (Section~\ref{sec:Hermite}),
the Schr\"odinger operators in $\mathbb{R}^n$, $n\geq 3$ (Section~\ref{sec:Schorindger}) and
the Laguerre operators in $(0,\infty )$ (Section~\ref{sec:Laguerre}).

Throughout this paper by $C$ and $c$ we always denote positive constants that can change in each occurrence.\\

\noindent {\it \textbf{Acknowledgements}}. The authors wish to thank Professor Peter Sj\"ogren for posing us, after knowing our
results in \cite{BCCFR}, the question of dealing with the heat semigroup for the Hermite operator.

\section{Proof of Theorem~\ref{mean} for the Hermite operator}\label{sec:Hermite}

In this section we prove $(a) \Leftrightarrow (b)$ in Theorem~\ref{mean}.

\subsection{} \fbox{$(a) \Rightarrow (b)$} Let $\ell =1,2$, $n \geq 1$ and $1<p<\infty$. We define $\mathcal{G}^\ell_{-\Delta ,\B}(f)$, for
every $f \in L^p(\R,\B)$, as follows
$$\mathcal{G}^\ell_{-\Delta ,\B}(f)(x,t)
    =t^\ell \partial_t^\ell W_t(f)(x), \quad x \in \R \mbox{ and } t>0.$$
Assume that $\B$ is a UMD Banach space.

We start proving that
$$\|\mathcal{G}_{\mathcal{H},\B}^\ell (f)\|_{L^p(\mathbb{R}^n,\gamma (H,\mathbb{B}))}
    \leq C\|f\|_{L^p(\mathbb{R}^n,\mathbb{B})},\quad f\in L^p(\mathbb{R}^n,\mathbb{B}).$$

Let $f\in L^p(\mathbb{R}^n,\mathbb{B})$. We can write
\begin{equation}\label{2.1}
    \partial_t^\ell W_t^\mathcal{H}(f)(x)
        =\int_{\mathbb{R}^n} \partial_t^\ell W_t^\mathcal{H}(x,y)f(y)dy,\quad x\in \mathbb{R}^n\mbox{ and }t>0.
\end{equation}
Derivation under the integral sign is justified. Indeed, we have, for every $x,y\in \mathbb{R}^n\mbox{ and }t>0$,
\begin{align}\label{2.2}
    \partial_t W_t^\mathcal{H}(x,y)
        =& \frac{1}{\pi ^{n/2}}\left(\frac{e^{-2t}}{1-e^{-4t}}\right)^{n/2}\exp\left[-\frac{1}{4}\left(|x-y|^2\frac{1+e^{-2t}}{1-e^{-2t}}+|x+y|^2\frac{1-e^{-2t}}{1+e^{-2t}}\right)\right]\nonumber\\
        &\times \left[-n\frac{1+e^{-4t}}{1-e^{-4t}}+|x-y|^2\frac{e^{-2t}}{(1-e^{-2t})^2}
                -|x+y|^2\frac{e^{-2t}}{(1+e^{-2t})^2}\right],
\end{align}
and
\begin{align}\label{2.3}
    \partial_t^2 W_t^\mathcal{H}(x,y)
        =& \frac{1}{\pi ^{n/2}}\left(\frac{e^{-2t}}{1-e^{-4t}}\right)^{n/2}\exp\left[-\frac{1}{4}\left(|x-y|^2\frac{1+e^{-2t}}{1-e^{-2t}}+|x+y|^2\frac{1-e^{-2t}}{1+e^{-2t}}\right)\right]\nonumber\\
        &\times\left\{\left[-n\frac{1+e^{-4t}}{1-e^{-4t}}+|x-y|^2\frac{e^{-2t}}{(1-e^{-2t})^2}
                -|x+y|^2\frac{e^{-2t}}{(1+e^{-2t})^2}\right]^2\right.\nonumber\\
        &+\left.\frac{8ne^{-4t}}{(1-e^{-4t})^2}-|x-y|^2\frac{2e^{-2t}(1+e^{-2t})}{(1-e^{-2t})^3}
                +|x+y|^2\frac{2e^{-2t}(1-e^{-2t})}{(1+e^{-2t})^3}\right\}.
\end{align}
Hence, we deduce that, for $k=0,1,2$,
\begin{equation}\label{2.4}
    \left|t^k \partial_t^k W_t^\mathcal{H}(x,y)\right|
        \leq C\frac{t^k e^{-nt}e^{-c|x-y|^2/t}}{(1-e^{-2t})^{n/2+k}}
        \leq C\frac{e^{-c|x-y|^2/t}}{t^{n/2}},\quad x,y\in\mathbb{R}^n \mbox{ and }t>0.
\end{equation}
Estimation \eqref{2.4} justifies the derivation under the integral sign in \eqref{2.1}.

We split the operators $\mathcal{G}_{\mathcal{H},\B}^\ell$ and $\mathcal{G}_{-\Delta ,\B}^\ell$ as follows. We write, for
$\mathcal{Q}=\mathcal{H}$ or $\mathcal{Q}=-\Delta$,
$$\mathcal{G}_{\mathcal{Q},\B}^\ell
    = \mathcal{G}_{\mathcal{Q},\B,{\rm loc}}^\ell + \mathcal{G}_{\mathcal{Q},\B,{\rm glob}}^\ell,$$
where
\begin{equation}\label{7.1}
    \mathcal{G}_{\mathcal{Q},\B,{\rm loc}}^\ell(f)(x,t)
        =\mathcal{G}_{\mathcal{Q},\B}^\ell(\chi_{B(x,\rho (x))}(y)f(y))(x,t),\quad x\in \mathbb{R}^n,t>0,
\end{equation}
and
$$ \rho(x)=\left\{
        \begin{array}{ll}
            \dfrac{1}{2}, &|x| \leq 1\\
            \\
            \dfrac{1}{1+|x|}, & |x| > 1
        \end{array}\right. .$$
For every $x \in \R$, $\rho(x)$ is called the critical radius in $x$ (see \cite[p. 516]{Sh}).

We consider the following decomposition of the operator $\mathcal{G}_{\mathcal{H},\B}^\ell$:
$$\mathcal{G}_{\mathcal{H},\B}^\ell
    =\sum_{j=1}^3T_{j,\B}^\ell,$$
where
$T_{1,\B}^\ell=\mathcal{G}_{\mathcal{H},\B,{\rm loc}}^\ell-\mathcal{G}_{-\Delta,\B,{\rm loc}}^\ell$,
$T_{2,\B}^\ell=\mathcal{G}_{\mathcal{H},\B,{\rm glob}}^\ell$ and
$T_{3,\B}^\ell=\mathcal{G}_{-\Delta ,\B,{\rm loc}}^\ell$.

\begin{Lem}\label{Lem T}
    Let $\B$ be a UMD Banach space and $j=1,2,3$. Then, there exits $C>0$ verifying that
    \begin{equation}\label{2.5}
        \|T_{j,\B}^\ell (f)\|_{L^p(\mathbb{R}^n,\gamma (H,\mathbb{B}))}
            \leq C\|f\|_{L^p(\mathbb{R}^n,\mathbb{B})}, \quad f \in L^p(\mathbb{R}^n,\mathbb{B}).
    \end{equation}
\end{Lem}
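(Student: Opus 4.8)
The strategy is to treat the three pieces $T_{1,\B}^\ell$, $T_{2,\B}^\ell$, $T_{3,\B}^\ell$ by quite different arguments, each matched to the nature of the piece. The easiest is $T_{3,\B}^\ell=\mathcal{G}_{-\Delta ,\B,{\rm loc}}^\ell$: here we simply compare with the full (non-truncated) operator $\mathcal{G}_{-\Delta ,\B}^\ell$, which is exactly the wavelet transform $\mathcal{W}_{\varphi^\ell}$ (up to the normalization $W_t=G_{\sqrt t}*\,\cdot\,$) associated with the Gaussian-derived kernel $\varphi^\ell$. By \cite[Theorem 4.2]{KW1} this operator is bounded $L^p(\R,\B)\to L^p(\R,\gamma(H,\B))$, so it suffices to control the ``error'' $\mathcal{G}_{-\Delta ,\B}^\ell-\mathcal{G}_{-\Delta ,\B,{\rm loc}}^\ell$, which only involves the kernel restricted to $|x-y|\ge\rho(x)$; there the Gaussian factor $e^{-c|x-y|^2/t}$ together with an integration in $t$ gives a kernel dominated by $C|x-y|^{-n}\chi_{\{|x-y|\ge\rho(x)\}}$ in the $\gamma$-norm, and one checks this defines a bounded operator on $L^p(\R,\B)$ by a direct (Schur-type / Young-type) estimate, using that $\rho$ is slowly varying.

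For $T_{1,\B}^\ell=\mathcal{G}_{\mathcal{H},\B,{\rm loc}}^\ell-\mathcal{G}_{-\Delta,\B,{\rm loc}}^\ell$, the idea is the ``good perturbation near the singularity'' principle advertised in the introduction. Using the explicit Mehler kernel \eqref{2.2}--\eqref{2.3} and the heat kernel $W_t(x,y)=(4\pi t)^{-n/2}e^{-|x-y|^2/(4t)}$, I would estimate the difference of kernels $t^\ell\partial_t^\ell W_t^\mathcal{H}(x,y)-t^\ell\partial_t^\ell W_t(x,y)$ pointwise, for $|x-y|\le\rho(x)$ and $0<t$, splitting into $t\le\rho(x)^2$ and $t>\rho(x)^2$. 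In the local range one expands $\frac{1+e^{-2t}}{1-e^{-2t}}=\frac1t+O(t)$, $\frac{e^{-2t}}{(1-e^{-2t})^2}=\frac1{4t^2}+O(1)$, etc., so that the Hermite kernel equals the heat kernel times $1+O(t+|x\pm y|^2)$ plus controlled corrections; since $|x-y|\le\rho(x)$ and (on the support) $|x+y|\lesssim |x|+\rho(x)\lesssim 1/\rho(x)$ one gets an extra gain of a positive power of $t/\rho(x)^2$ (when $t\le\rho(x)^2$) or a rapidly decaying factor $e^{-ct}$ (when $t>\rho(x)^2$, using that $t^{-n/2}e^{-c|x-y|^2/t}$ is integrated against $dt/t$ and the Mehler prefactor $e^{-nt}(1-e^{-2t})^{-n/2}$ decays). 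Taking the $H=L^2((0,\infty),dt/t)$-norm in $t$ of this difference yields a kernel $K^\ell_1(x,y)$ with $\|K^\ell_1(x,y)\|\le C\,\rho(x)^{-n}(|x-y|/\rho(x))^{-n+\delta}\chi_{\{|x-y|\le\rho(x)\}}$ for some $\delta>0$ — integrable in $y$ uniformly in $x$ and symmetrically — whence $L^p(\R,\B)$-boundedness follows again by a Schur/Young argument, with no UMD needed for this piece.

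The piece $T_{2,\B}^\ell=\mathcal{G}_{\mathcal{H},\B,{\rm glob}}^\ell$ is handled purely by kernel estimates: here $|x-y|\ge\rho(x)$, and from \eqref{2.4} one has $|t^k\partial_t^k W_t^\mathcal{H}(x,y)|\le C t^k e^{-nt}(1-e^{-2t})^{-n/2-k}e^{-c|x-y|^2/t}$. Computing $\|t^\ell\partial_t^\ell W_t^\mathcal{H}(x,y)\|_H$ by integrating in $t$, the factor $e^{-c|x-y|^2/t}$ for $t\lesssim 1$ gives, after $\int_0^1 t^{2\ell}e^{-c|x-y|^2/t}\frac{dt}{t^{n+1}}\lesssim |x-y|^{-n}$ (and with a gain since $|x-y|\gtrsim\rho(x)$), while for $t\gtrsim 1$ the factor $e^{-nt}$ (times polynomial) is integrable and $e^{-c|x-y|^2/t}\le e^{-c|x-y|^2}$, producing exponential decay in $|x-y|$. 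Altogether $\|W^\mathcal{H}_{2,\ell}(x,y)\|_H\le C\,\rho(x)^\delta |x-y|^{-n-\delta}$ roughly, which is an integrable kernel in both variables; the resulting operator is bounded on $L^p(\R,\B)$ for all $1\le p\le\infty$ by Young's inequality, again without invoking UMD.

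I expect the main obstacle to be the bookkeeping in $T_{1,\B}^\ell$: one must extract a genuine positive-power gain in $t/\rho(x)^2$ from the difference of Mehler and Gaussian heat kernels and their $\ell$-th $t$-derivatives, keeping careful track of the competing factors $|x+y|^2$, which are only controlled through the interaction of the localization radius $\rho(x)$ with the Gaussian in $|x-y|$; squaring the bracketed polynomial in \eqref{2.3} for $\ell=2$ makes this the most delicate case. Once the pointwise-in-$t$ difference estimate is in hand, taking the $H$-norm and the final $L^p$-boundedness are routine. (The role of UMD in this lemma is only indirect: it enters through \cite[Theorem 4.2]{KW1} used for the Laplacian model operator inside the $T_{3,\B}^\ell$ argument; the perturbation terms $T_{1,\B}^\ell$ and $T_{2,\B}^\ell$ are bounded by scalar kernel methods that work in any Banach space.)
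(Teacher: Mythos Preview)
Your overall strategy is sound, but there is a genuine gap in your treatment of $T_{3,\B}^\ell$, and a closely related one in $T_{2,\B}^\ell$.

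For $T_{3,\B}^\ell$ you propose to write $\mathcal{G}_{-\Delta,\B,{\rm loc}}^\ell=\mathcal{G}_{-\Delta,\B}^\ell-\mathcal{G}_{-\Delta,\B,{\rm glob}}^\ell$ and control the global part by the kernel bound $\|t^\ell\partial_t^\ell G_{\sqrt t}(x-y)\|_H\le C|x-y|^{-n}$ on $\{|x-y|>\rho(x)\}$. But this kernel is \emph{not} integrable: $\int_{|x-y|>\rho(x)}|x-y|^{-n}\,dy=\infty$, so no Schur or Young argument can work, and ``$\rho$ is slowly varying'' does not help since the Euclidean heat kernel carries no information about $\rho$. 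The paper circumvents this precisely by \emph{not} looking at the full global tail: using the covering $\{B(x_k,\rho(x_k))\}$ with bounded overlap, it writes, for $x\in B(x_k,\rho(x_k))$,
\[
\mathcal{G}_{-\Delta,\B,{\rm loc}}^\ell(f)(x,\cdot)=\mathcal{G}_{-\Delta,\B}^\ell\bigl(\chi_{B(x_k,C_0\rho(x_k))}f\bigr)(x,\cdot)-\mathcal{G}_{-\Delta,\B}^\ell\bigl(\chi_{B(x_k,C_0\rho(x_k))\setminus B(x,\rho(x))}f\bigr)(x,\cdot),
\]
so the error term only integrates over the \emph{bounded} annulus $\rho(x)\lesssim|x-y|\lesssim\rho(x)$, where $|x-y|^{-n}$ contributes $\sim\rho(x)^{-n}|B(x_k,C_0\rho(x_k))|\sim 1$, and the result is dominated by $\mathcal{M}(\|f\|_\B)$. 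The first term is handled by \cite{KW1} applied to the compactly supported function $\chi_{B(x_k,C_0\rho(x_k))}f$, and bounded overlap lets one sum in $k$.

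The same issue affects your $T_{2,\B}^\ell$ argument. With only the bound \eqref{2.4}, the $H$-norm of the Hermite kernel in the region $\rho(x)\le|x-y|\le 1$ is again $\sim|x-y|^{-n}$, and $\int_{\rho(x)}^1 r^{-1}\,dr=\log(1/\rho(x))$ is unbounded as $|x|\to\infty$; the large-$t$ factor $e^{-nt}$ only helps when $|x-y|\gtrsim 1$. The paper obtains the missing gain from a sharper, Hermite-specific pointwise bound (see \cite{BCFST}): the cross term $|x+y|^2\frac{1-e^{-2t}}{1+e^{-2t}}$ in the Mehler exponent yields an extra factor $e^{-c(|x|+|y|)|x-y|}$, and since $(|x|+|y|)\rho(x)\ge c_0>0$ whenever $|x-y|>\rho(x)$, the dyadic sum over annuli $|x-y|\sim 2^m\rho(x)$ picks up $e^{-c2^m}$ and converges, giving $\|T_{2,\B}^\ell(f)(x,\cdot)\|_{\gamma(H,\B)}\le C\,\mathcal{M}(\|f\|_\B)(x)$.

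Your plan for $T_{1,\B}^\ell$ (direct Taylor expansion of the Mehler coefficients) is different from the paper's, which instead uses the Duhamel perturbation formula $\partial_t[G_{\sqrt t}-W_t^\mathcal{H}]=\int_0^{t}\int_{\R}|z|^2(\cdots)\,dz\,ds$ from \cite{DGMTZ}; both routes are viable for the Hermite potential and lead to a kernel bounded by $C\rho(x)^{-1/2}|x-y|^{-n+1/2}$ on the local region, hence by $\mathcal{M}(\|f\|_\B)$.
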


\begin{proof}[Proof of Lemma~\ref{Lem T} for $T_{3,\B}^\ell$]
    We consider $\varphi^\ell (x)=\Big( \partial_t^\ell G_{\sqrt{t}}(x) \Big)_{|t=1}$, $x\in \mathbb{R}^n$.
    Thus, $\varphi^\ell \in S(\mathbb{R}^n)\subset L^2(\mathbb{R}^n)$, where
    $S(\mathbb{R}^n)$ denotes the Schwartz class. Moreover, $\varphi^\ell$  satisfies conditions
    $(C1)$ and $(C2)$ in \cite[p. 111]{KW1}. Indeed, according to
   \cite[p. 121, (23)]{EMOT} we have that
    \begin{align*}
        \widehat{\varphi ^\ell}(y)
            &=\int_{\mathbb{R}^n}e^{-ix\cdot y}\Big[ \partial_t^\ell \left(\frac{e^{-|x|^2/(4t)}}{(4\pi t)^{n/2}}\right)\Big]_{\big|t=1}dx\\
            &= \partial_t^\ell\left[\int_{\mathbb{R}^n}e^{-ix\cdot y}\frac{e^{-|x|^2/(4t)}}{(4\pi t)^{n/2}}dx\right]_{\big|t=1}\\
            &=\partial_t^\ell\left(e^{-t|y|^2}\right)_{\big|t=1}
            =(-|y|^2)^\ell e^{-|y|^2},\quad y\in \mathbb{R}^n.
    \end{align*}
    Now, straightforward manipulations allow us to see that the conditions $(C1)$ and $(C2)$ in \cite[p. 111]{KW1} are satisfied by
    $\varphi^\ell$. On the other hand,
    $\varphi_t^\ell(x)=t^{-n}\varphi ^\ell (x/t)=\Big(s^\ell \partial_s^\ell G_{\sqrt{s}}(x)\Big)_{|s=t^2}$, $x\in \mathbb{R}^n$, and $t>0$.
    Note that if $\{h_n\}_{n \in \mathbb{N}}$ is an orthonormal basis in $H$, then $\{h_n(\sqrt{t})/\sqrt{2}\}_{n \in \mathbb{N}}$ is
    also an orthonormal basis in $H$. Hence,
    $$\| \mathcal{G}_{-\Delta, \B}^\ell (g) (x,\cdot)\|_{\gamma(H,\B)}
        = \sqrt{2} \| (\varphi_\cdot^\ell * g)(x)\|_{\gamma(H,\B)}, \quad g \in S(\R,\B) \text{ and } x \in \R.$$

    Hence, by invoking \cite[Theorem 4.2]{KW1} there exists a bounded operator $\widetilde{\mathcal{G}}_{-\Delta , \B }^\ell$ from
    $L^p(\mathbb{R}^n,\mathbb{B})$ into $L^p(\mathbb{R}^n,\gamma(H,\B))$ such that
$$
\widetilde{\mathcal{G}}_{-\Delta , \B}^\ell(g)=\mathcal{G}_{-\Delta ,\B}^\ell (g), \quad g \in S(\R,\B).
$$
    Let $f\in L^p(\mathbb{R}^n,\mathbb{B})$. We are going to see that $\widetilde{\mathcal{G}}_{-\Delta ,\B}^\ell(f)=\mathcal{G}_{-\Delta ,\B}^\ell (f).$ In order to do
    this we choose a sequence $(f_m)_{m=1}^\infty \subset C_c^\infty(\mathbb{R}^n)\otimes \mathbb{B}$ such that $f_m\longrightarrow f$,
    as $m\rightarrow \infty$, in $L^p(\mathbb{R}^n,\mathbb{B})$. Note that $C_c^\infty (\mathbb{R}^n)\otimes \mathbb{B}\subset S(\mathbb{R}^n,\mathbb{B})$
    is a dense subset of $L^p(\mathbb{R}^n,\mathbb{B})$. It can be shown that
    \begin{equation}\label{8.1}
        \left|t^\ell \partial_t^\ell G_{\sqrt{t}}(x-y)\right|
            \leq C\frac{e^{-c|x-y|^2/t}}{t^{n/2}},\quad x,y \in \mathbb{R}^n\mbox{ and }t>0.
    \end{equation}
    Then, for every $N \in \mathbb{N}$ and $x\in \mathbb{R}^n$, there exists $C_N>0$ for which
    \begin{align*}
        & \|\mathcal{G}_{-\Delta ,\B}^\ell (f)(x,\cdot)-\mathcal{G}_{-\Delta ,\B}^\ell (f_m)(x,\cdot)\|_{L^2\left((1/N,N),\frac{dt}{t};\mathbb{B}\right)}&&\\
        & \qquad \qquad \leq \int_{\mathbb{R}^n}\|f(y)-f_m(y)\|_\mathbb{B}\left\|t^\ell \partial_t^\ell G_{\sqrt{t}}(x-y)\right\|_{L^2\left((1/N,N),\frac{dt}{t}\right)}dy\\
        & \qquad \qquad \leq C_N\int_{\mathbb{R}^n}\|f(y)-f_m(y)\|_\mathbb{B}\left(\int_{1/N}^N \frac{1}{(t+|x-y|^2)^{n+1}}dt\right)^{1/2}dy\\
        & \qquad \qquad \leq C_N\int_{\mathbb{R}^n}\|f(y)-f_m(y)\|_\mathbb{B}\frac{1}{(1/N+|x-y|^2)^{n/2}}dy\\
        & \qquad \qquad \leq C_N\|f-f_m\|_{L^p(\mathbb{R}^n,\mathbb{B})}\left(\int_{\mathbb{R}^n}\frac{1}{(1/\sqrt{N}+|x-y|)^{np'}}dy\right)^{1/p'}\\
        & \qquad \qquad \leq C_N\|f-f_m\|_{L^p(\mathbb{R}^n,\mathbb{B})},\quad m\in \mathbb{N}.
    \end{align*}
    Hence, for every $N\in \mathbb{N}$ and $x\in \mathbb{R}^n$,
    $$\mathcal{G}_{-\Delta ,\B}^\ell (f_m)(x,\cdot)
        \longrightarrow \mathcal{G}_{-\Delta ,\B}^\ell(f)  (x,\cdot), \quad \text{as } m \rightarrow \infty \text{ in } L^2\left((1/N,N),\frac{dt}{t};\mathbb{B}\right).$$

    On the other hand,
    $$\mathcal{G}_{-\Delta ,\B}^\ell (f_m)
        \longrightarrow\widetilde{\mathcal{G}}_{-\Delta , \B}^\ell(f) , \quad \text{as } m\rightarrow \infty \text{ in } L^p(\mathbb{R}^n,\gamma (H,\mathbb{B})).$$
    Then, there exists a subsequence of $(f_m)_{m=1}^\infty$ which we continue denoting by  $(f_m)_{m=1}^\infty$, satisfying
    $$\mathcal{G}_{-\Delta ,\B}^\ell (f_m)(x,\cdot)
        \longrightarrow \widetilde{\mathcal{G}}_{-\Delta , \B}^\ell(f)(x), \quad \text{as } m\rightarrow \infty \text{ in } \gamma (H,\mathbb{B}),$$
    for every $x\in {\bf N}$, where ${\bf N} \subset \R$ and $|\R \setminus {\bf N}|=0$.
    Since $\gamma (H,\mathbb{B})$ is
    continuously contained in $L(H,\mathbb{B})$, we have that, for every $x \in {\bf N}$,
    $$\mathcal{G}_{-\Delta ,\B}^\ell (f_m)(x,\cdot)
        \longrightarrow \widetilde{\mathcal{G}}_{-\Delta , \B}^\ell(f)(x), \quad \text{as } m\rightarrow \infty \text{ in } L(H,\mathbb{B}).$$

    Let $x\in {\bf N}$. We choose $h\in H$ such that its support is compact and contained in $(0,\infty )$. For every $S\in \mathbb{B}^*$ we can write
    \begin{align*}
        & \langle S,[\widetilde{\mathcal{G}}_{-\Delta , \B}^\ell(f)(x)](h)\rangle _{\mathbb{B}^*,\mathbb{B}}
            = \lim_{m \to \infty }\langle S,[\mathcal{G}_{-\Delta ,\B}^\ell (f_m)(x, \cdot)](h)\rangle _{\mathbb{B}^*,\mathbb{B}} \\
        & \qquad  =  \langle S,\int_0^\infty\mathcal{G}_{-\Delta ,\B}^\ell (f)(x,t)h(t)\frac{dt}{t}\rangle _{\mathbb{B}^*,\mathbb{B}}
            = \int_0^\infty \langle S,\mathcal{G}_{-\Delta ,\B}^\ell (f)(x,t)\rangle _{\mathbb{B}^*,\mathbb{B}}h(t)\frac{dt}{t}.
    \end{align*}
    Moreover,
    \begin{align*}
        \left|\int_0^\infty \langle S,\mathcal{G}_{-\Delta ,\B}^\ell (f)(x,t)\rangle _{\mathbb{B}^*,\mathbb{B}}h(t)\frac{dt}{t}\right|
            & = \left|\langle S,[\widetilde{\mathcal{G}}_{-\Delta , \B}^\ell(f)(x)](h)\rangle _{\mathbb{B}^*,\mathbb{B}}\right| \\
            & \leq \|S\|_{\mathbb{B}^*}\|\widetilde{\mathcal{G}}_{-\Delta , \B}^\ell(f)(x)\|_{L(H,\mathbb{B})}\|h\|_\mathbb{B}.
    \end{align*}

    We conclude that $\langle S,\mathcal{G}_{-\Delta ,\B}^\ell(f)(x,\cdot )\rangle _{\mathbb{B}^*,\mathbb{B}}\in H$ and
    $$\langle S,[\widetilde{\mathcal{G}}_{-\Delta , \B}^\ell(f)(x)](w)\rangle _{\mathbb{B}^*,\mathbb{B}}
        =\int_0^\infty \langle S,\mathcal{G}_{-\Delta ,\B}^\ell (f)(x,t)\rangle _{\mathbb{B}^*,\mathbb{B}}w(t)\frac{dt}{t},\quad w\in H.$$
    Thus we prove that $\widetilde{\mathcal{G}}_{-\Delta ,\B}^\ell(f)(x)=\mathcal{G}_{-\Delta ,\B}^\ell (f)(x,\cdot)$ as
    elements of $\gamma (H,\mathbb{B})$.

    We now use the ideas developed in \cite[Proposition 2.3]{HTV} to see that \eqref{2.5} holds for $j=2$.
   According to \cite[Proposition 5]{DGMTZ}, for every $M>0$ there exists $C>0$ such that
    \begin{equation}\label{2.6}
        \frac{1}{C} \leq \frac{\rho(x)}{\rho(y)} \leq C, \quad x \in B(y,M \rho(y)).
    \end{equation}
    We can find a sequence $(x_k)_{k=1}^\infty$ such that
    \begin{itemize}
        \item[$(i)$] $\displaystyle \bigcup_{k=1}^\infty B(x_k,\rho(x_k))=\R$,
        \item[$(ii)$] For every $M>0$ there exists $m \in \mathbb{N}$ such that, for each $j \in \mathbb{N}$, 
        $$\mbox{card }\{ k  \in \mathbb{N} : B(x_k,M \rho(x_k)) \cap B(x_j, M \rho(x_j))\not=\emptyset \} \leq m.$$
    \end{itemize}

    Let $k \in \mathbb{N}$. If $x \in B(x_k,\rho(x_k))$, then \eqref{2.6} implies that $|y-x_k| \leq \rho(x)+\rho(x_k) \leq C_0
    \rho(x_k)$, provided that $y \in B(x,\rho(x))$. Here $C_0>0$ does not depend on $k \in \mathbb{N}$. We can write for every $x \in B(x_k,\rho(x_k))$ and $t>0,$
    \begin{align*}
        \mathcal{G}_{-\Delta ,\B,\rm{loc}}^\ell (f)(x,t)
            & = \mathcal{G}_{-\Delta ,\B}^\ell \left(\chi_{B(x_k,C_0\rho(x_k))}f\right)(x,t)
                + \mathcal{G}_{-\Delta ,\B}^\ell \left(\left( \chi_{B(x,\rho(x))} - \chi_{B(x_k,C_0\rho(x_k))}\right)f\right)(x,t) \\
            & = \mathcal{G}_{-\Delta ,\B}^\ell \left(\chi_{B(x_k,C_0\rho(x_k))}f\right)(x,t)
                - \mathcal{G}_{-\Delta ,\B}^\ell \left(\chi_{B(x_k,C_0\rho(x_k)) \setminus B(x,\rho(x))}f\right)(x,t).
    \end{align*}

    Let $x \in B(x_k,\rho(x_k))$. We consider the operator
    $$L_{k,x}(f)(t)
        = \mathcal{G}_{-\Delta ,\B}^\ell \left(\chi_{B(x_k,C_0\rho(x_k)) \setminus B(x,\rho(x))}f\right)(x,t), \quad t>0. $$
    By \eqref{8.1} we have that
$$
        \left\| t^\ell \partial_t^\ell G_{\sqrt{t}}(x-y) \right\|_H
        \leq C \left( \int_0^\infty \frac{e^{-c|x-y|^2/t}}{t^{n+1}} dt \right)^{1/2}
            \leq \frac{C}{|x-y|^n}, \quad y \in \R \setminus\{x\}.
$$
    Hence, for every $y \notin B(x,\rho(x))$, the function $g_{x,y}(t)=t^\ell \partial_t^\ell G_{\sqrt{t}}(x-y)$, $t \in (0,\infty)$, belongs to $H$
    and $\|g_{x,y}\|_H \leq C/\rho(x)^n$. Then, $L_{k,x}(f) \in L^2((0,\infty),dt/t;\B)$ and
    \begin{align*}
        \|L_{k,x}(f)\|_{L^2\left((0,\infty),\frac{dt}{t};\B\right)}
            & \leq \int_{B(x_k,C_0\rho(x_k)) \setminus B(x,\rho(x))} \|g_{x,y}\|_H \|f(y)\|_\B dy \\
            & \leq \frac{C}{\rho(x)^n} \int_{B(x_k,C_0 \rho(x_k))} \|f(y)\|_\B dy.
    \end{align*}
    Hence, $L_{k,x}(f) \in \gamma(H,\B)$. Indeed, by (\ref{normsup})  we have
    $$\|L_{k,x}(f)\|_{\gamma(H,\B)}
        = \sup \left( \mathbb{E} \left\| \sum_j \gamma_j \int_0^\infty L_{k,x}(f)(t) h_j(t) \frac{dt}{t} \right\|_\B^2 \right)^{1/2},$$
  where $(\gamma_j)_{j=1}^\infty$ is a sequence 
    of independent standard Gaussian random variables and the supremum is taken over all the finite families $\{h_j\}$ of orthonormal functions in $H$. Suppose that $(h_j)_{j=1}^m$ is
    an orthonormal set in $H$.
    We can write
    \begin{align*}
        & \left( \mathbb{E} \left\| \sum_{j=1}^m \gamma_j \int_0^\infty L_{k,x}(f)(t) h_j(t) \frac{dt}{t} \right\|_\B^2 \right)^{1/2} \\
        & \qquad \qquad = \left( \mathbb{E} \left\| \sum_{j=1}^m \gamma_j
                            \int_0^\infty \int_{B(x_k,C_0\rho(x_k)) \setminus B(x,\rho(x))} g_{x,y}(t) f(y) dy h_j(t) \frac{dt}{t} \right\|_\B^2 \right)^{1/2} \\
        & \qquad \qquad = \left( \mathbb{E} \left\|  \int_{B(x_k,C_0\rho(x_k)) \setminus B(x,\rho(x))} f(y)
                            \sum_{j=1}^m \gamma_j \int_0^\infty g_{x,y}(t)   h_j(t) \frac{dt}{t} dy\right\|_\B^2 \right)^{1/2} \\
        & \qquad \qquad \leq  \int_{B(x_k,C_0\rho(x_k)) \setminus B(x,\rho(x))} \|f(y)\|_\B
                            \left( \mathbb{E} \left| \sum_{j=1}^m \gamma_j \int_0^\infty g_{x,y}(t)   h_j(t) \frac{dt}{t} \right|^2 \right)^{1/2} dy\\
        & \qquad \qquad \leq  \int_{B(x_k,C_0\rho(x_k)) \setminus B(x,\rho(x))} \|f(y)\|_\B \|g_{x,y}\|_{\gamma(H,\mathbb{C})} dy\\
        & \qquad \qquad =  \int_{B(x_k,C_0\rho(x_k)) \setminus B(x,\rho(x))} \|f(y)\|_\B \|g_{x,y}\|_{H} dy\\
        & \qquad \qquad \leq \frac{C}{\rho(x)^n}  \int_{B(x_k,C_0\rho(x_k))} \|f(y)\|_\B dy.
    \end{align*}
    Hence,
    $$\|L_{k,x}(f)\|_{\gamma(H,\B)}
        \leq \frac{C}{\rho(x)^n}  \int_{B(x_k,C_0\rho(x_k))} \|f(y)\|_\B dy.$$
    By using \eqref{2.6}, we deduce that $B(x_k,C_0\rho(x_k)) \subset B(x,C_1\rho(x))$, where $C_1$ does not depend on $k$ neither on $x$. Then, we get
    $$\|L_{k,x}(f)\|_{\gamma(H,\B)}
        \leq \frac{C}{\rho(x)^n}  \int_{B(x,C_1\rho(x))} \|f(y)\|_\B dy
        \leq \mathcal{M}(\|f\|_\B)(x),$$
    where $\mathcal{M}$ denotes the Hardy-Littlewood maximal function.

    According to the classical maximal theorem and the boundedness of $\mathcal{G}_{-\Delta ,\B}^\ell$
    from $L^p(\R,\B)$ into $L^p(\R,\gamma(H,\B))$, we obtain
    \begin{align*}
        \| \mathcal{G}_{-\Delta ,\B,\rm{loc}}^\ell (f) \|^p_{L^p(\R,\gamma(H,\B))}
            \leq & \sum_{k=1}^\infty \int_{B(x_k,\rho(x_k))} \| \mathcal{G}_{-\Delta ,\B,\rm{loc}}^\ell (f) \|^p_{\gamma(H,\B)} dx \\
            \leq & C \sum_{k=1}^\infty \Big(
                    \int_{\R} \| \mathcal{G}_{-\Delta ,\B}^\ell \left(\chi_{B(x_k, C_0\rho(x_k))}  f \right)(x,\cdot) \|^p_{\gamma(H,\B)} dx \\
            & + \int_{\R} \| \mathcal{G}_{-\Delta ,\B}^\ell \left(\chi_{B(x_k,C_0\rho(x_k)) \setminus B(x,\rho(x))}  f \right)(x,\cdot) \|^p_{\gamma(H,\B)} dx \Big)\\
             \leq & C  \Big( \sum_{k=1}^\infty \int_{B(x_k, C_0\rho(x_k))} \|f(y)\|_\B^p dy
                    + \int_{\R} \left| \mathcal{M}(\|f\|_\B)(x)  \right|^p dx \Big) \\
            \leq &C \int_{\R} \|f(y)\|_\B^p dy.
    \end{align*}
    We conclude that \eqref{2.5} holds for $T_{3,\B}^\ell$.
\end{proof}

\begin{proof}[Proof of Lemma~\ref{Lem T} for $T_{1,\B}^\ell$]
    By using the perturbation formula (\cite[(5.25)]{DGMTZ}) we can write
    \begin{align*}
        \partial_t\left[G_{\sqrt{t}}(x-y)-W_t^\mathcal{H}(x,y)\right]
            = & \int_0^{t/2}\int_{\mathbb{R}^n}|z|^2 \left[\partial_u G_{\sqrt{u}}(x-z)\right]_{\big|u=t-s}W_s^\mathcal{H}(z,y)dzds\\
            &+\int_0^{t/2}\int_{\mathbb{R}^n}|z|^2 G_{\sqrt{s}}(x-z) \left[\partial_u W_u^\mathcal{H}(z,y)\right]_{\big|u=t-s}dzds\\
            &+\int_{\mathbb{R}^n}|z|^2 G_{\sqrt{t/2}}(x-z)W_{t/2}^\mathcal{H}(z,y)dz\\
            = &H_1^1(x,y,t)+H_2^1(x,y,t)+H_3^1(x,y,t),\quad x,y\in \mathbb{R}^n\mbox{ and }t>0.
    \end{align*}
    Then, it follows that
    \begin{align*}
        & \partial_t^2 \left[G_{\sqrt{t}}(x-y)-W_t^\mathcal{H}(x,y)\right]
            =\int_0^{t/2}\int_{\mathbb{R}^n}|z|^2\Big[ \partial_u^2 G_{\sqrt{u}}(x-z)\Big]_{\big|u=t-s}W_s^\mathcal{H}(z,y)dzds\\
        & \qquad \qquad + \int_0^{t/2}\int_{\mathbb{R}^n}|z|^2 G_{\sqrt{s}}(x-z)\Big[ \partial_u^2W_u^\mathcal{H}(z,y)\Big]_{\big|u=t-s}dzds\\
        & \qquad \qquad+\int_{\mathbb{R}^n}|z|^2\left(\Big[ \partial_u G_{\sqrt{u}}(x-z)\Big]_{\big|u=t/2}W_{t/2}^\mathcal{H}(z,y)+
                                    G_{\sqrt{t/2}}(x-z)\Big[ \partial_u W_u^\mathcal{H}(z,y)\Big]_{\big|u=t/2}\right)dz\\
        & \qquad = H_1^2(x,y,t)+H_2^2(x,y,t)+H_3^2(x,y,t),\quad x,y\in \mathbb{R}^n\mbox{ and }t>0.
    \end{align*}
    The following estimation will be very useful in the sequel. For every $x\in \mathbb{R}^n$ and $t>0$, we get
    $$\int_{\mathbb{R}^n}e^{-c|x-y|^2/t}|y|^2dy
        \leq C\int_{\mathbb{R}^n}e^{-c|z|^2/t}(|z|^2+|x|^2)dz\leq Ct^{n/2}(t+|x|^2).$$
    Then, we obtain
    \begin{equation}\label{2.7}
        \int_{\mathbb{R}^n}e^{-c|x-y|^2/t}|y|^2dy
            \leq C\frac{t^{n/2}}{\rho (x)^2},\quad x\in \mathbb{R}^n\mbox{ and }0<t\leq \rho (x) ^2.
    \end{equation}

    Minkowski's inequality leads to
    \begin{align*}
        \|T_{1,\B}^\ell(f)(x,\cdot)\|_{L^2\left( (0,\infty),\frac{dt}{t}; \B \right)}
            &\leq \int_{B(x,\rho(x))}\|f(y)\|_\mathbb{B} \left(\int_0^\infty \left|t^\ell \partial_t^\ell [G_{\sqrt{t}}(x-y)-W_t^\mathcal{H}(x,y)]\right|^2\frac{dt}{t}\right)^{1/2}dy\\
            &\leq C\sum_{j=1}^3\int_{B(x,\rho(x))}\|f(y)\|_\mathbb{B}\left(\int_0^\infty |t^\ell H_j^\ell (x,y,t)|^2\frac{dt}{t}\right)^{1/2}dy, \quad x \in \R.
    \end{align*}

    We now study
    $$A_j^\ell (x,y)
        =\left(\int_0^{\rho (x)^2}|t^\ell H_j^\ell (x,y,t)|^2\frac{dt}{t}\right)^{1/2},\quad x,y\in\mathbb{R}^n\mbox{ and }j=1,2,3.$$

    According to \eqref{2.4}, \eqref{8.1}, \eqref{2.6} and \eqref{2.7} we get
    \begin{align*}
        &\left(\int_0^{\rho (x)^2}|t^\ell H_1^\ell (x,y,t)|^2\frac{dt}{t}\right)^{1/2}
            \leq C\left(\int_0^{\rho (x)^2} \left(\int_0^{t/2}\int_{\mathbb{R}^n}|z|^2\frac{e^{-c|x-z|^2/(t-s)}}{(t-s)^{n/2}}\frac{e^{-c|y-z|^2/s}}{s^{n/2}}dzds\right)^2 \frac{dt}{t}\right)^{1/2}\\
        & \qquad \qquad \leq C\left(\int_0^{\rho (x)^2}\frac{e^{-c(|x-y|^2+|y-z|^2)/t}}{t^{n+1}}\left(\int_0^{t/2}\int_{\mathbb{R}^n}|z|^2\frac{e^{-c|y-z|^2/s}}{s^{n/2}}dzds\right)^2dt\right)^{1/2}\\
        & \qquad \qquad \leq \frac{C}{\rho (x)^2}\left(\int_0^{\rho (x)^2}\frac{e^{-c|x-y|^2/t}}{t^{n-1}}dt\right)^{1/2}\leq \frac{C}{\rho (x)^2|x-y|^{n-1/2}}\left(\int_0^{\rho (x)^2}\sqrt{t}dt\right)^{1/2}\\
        & \qquad \qquad \leq \frac{C}{\sqrt{\rho (x)}|x-y|^{n-1/2}},\quad x \in \mathbb{R}^n, \;y\in B(x,\rho (x)), \;x\not=y.
    \end{align*}
    Also by taking into account \eqref{2.4} and again \eqref{2.7}  it follows that
    \begin{align*}
        & \left(\int_0^{\rho (x)^2}|t^\ell H_2^\ell (x,y,t)|^2\frac{dt}{t}\right)^{1/2}
            \leq C\left(\int_0^{\rho (x)^2} \left(\int_0^{t/2}\int_{\mathbb{R}^n}|z|^2\frac{e^{-c|x-z|^2/s}}{s^{n/2}}\frac{e^{-c|y-z|^2/(t-s)}}{(t-s)^{n/2}}dzds\right)^2 \frac{dt}{t}\right)^{1/2}\\
        & \qquad \qquad \leq C\left(\int_0^{\rho (x)^2}\frac{e^{-c(|x-y|^2+|y-z|^2)/t}}{t^{n+1}}\left(\int_0^{t/2}\int_{\mathbb{R}^n}|z|^2\frac{e^{-c|x-z|^2/s}}{s^{n/2}}dzds\right)^2dt\right)^{1/2}\\
        & \qquad \qquad \leq \frac{C}{\sqrt{\rho (x)}|x-y|^{n-1/2}},\quad x,y\in \mathbb{R}^n, x\not=y,
    \end{align*}
    and
    \begin{align*}
        &\left(\int_0^{\rho (x)^2}|t^\ell H_3^\ell (x,y,t)|^2\frac{dt}{t}\right)^{1/2}
            \leq C\left(\int_0^{\rho (x)^2} \left(\int_{\mathbb{R}^n}|z|^2\frac{e^{-c(|x-z|^2+|y-z|^2)/t}}{t^{n -1}}dz\right)^2 \frac{dt}{t}\right)^{1/2}\\
        & \qquad \qquad \leq C\left(\int_0^{\rho (x)^2}\frac{e^{-c|x-y|^2/t}}{t^{2n-1}}\left(\int_{\mathbb{R}^n}|z|^2e^{-c|x-z|^2/t}dz\right)^2dt\right)^{1/2}\\
        & \qquad \qquad \leq \frac{C}{\rho (x)^2}\left(\int_0^{\rho (x)^2}\frac{e^{-c|x-y|^2/t}}{t^{n-1}}dt\right)^{1/2} \leq \frac{C}{\sqrt{\rho (x)}|x-y|^{n-1/2}},\quad x,y\in \mathbb{R}^n, x\not=y.
    \end{align*}
    By combining the above estimations we obtain
    \begin{align}\label{2.8}
        \sum_{j=1}^3\int_{B(x,\rho (x))}\|f(y)\|_\mathbb{B}A_j^\ell (x,y)dy
            & \leq C\int_{B(x,\rho (x))}\frac{\|f(y)\|_\mathbb{B}}{\sqrt{\rho (x)}|x-y|^{n-1/2}}dy\nonumber\\
            & \leq C\sum_{m=0}^\infty \frac{1}{\sqrt{\rho (x)}}\int_{2^{-m-1}\rho (x)\leq |x-y|<2^{-m}\rho (x)} \frac{\|f(y)\|_\mathbb{B}}{|x-y|^{n-1/2}}dy\nonumber\\
            & \leq C\sum_{m=0}^\infty \frac{1}{\rho (x)^ n2^{-m(n-1/2)}}\int_{B(x,2^{-m}\rho (x))} \|f(y)\|_\mathbb{B}dy\nonumber\\
            &\leq C\sum_{m=0}^\infty \frac{1}{2^{m/2}}\mathcal{M}(\|f\|_\mathbb{B})(x),\quad x\in \mathbb{R}^n.
    \end{align}

    On the other hand, \eqref{2.4} and \eqref{8.1} lead to
    \begin{align}\label{2.9}
        & \int_{B(x,\rho (x))}\|f(y)\|_\mathbb{B}\left(\int_{\rho(x)^2}^\infty \left| t^\ell\partial_t^\ell [G_{\sqrt{t}}(x-y)-W_t^\mathcal{H}(x,y)]\right|^2\frac{dt}{t}\right)^{1/2}dy \nonumber\\
        & \qquad \qquad \leq C\int_{B(x,\rho (x))}\|f(y)\|_\mathbb{B}\left(\int_{\rho(x)^2}^\infty \frac{e^{-c|x-y|^2/t}}{t^{n+1}}dt\right)^{1/2}dy\nonumber\\
        & \qquad \qquad \leq C \left(\int_{\rho(x)^2}^\infty \frac{1}{t^{n+1}}dt\right)^{1/2}\int_{B(x,\rho (x))}\|f(y)\|_\mathbb{B}dy
                        \leq \frac{1}{\rho (x)^n}\int_{B(x,\rho (x))}\|f(y)\|_\mathbb{B} dy \nonumber\\
        & \qquad \qquad \leq C\mathcal{M}(\|f\|_\mathbb{B})(x),\quad x\in \mathbb{R}^n.
    \end{align}

    From \eqref{2.8}, \eqref{2.9} we conclude that
    $$\|T_{1,\B}^\ell (f)(x,\cdot)\|_{L^2\left( (0,\infty), \frac{dt}{t}; \B \right)}
        \leq C \mathcal{M}(\|f\|_\B)(x), \quad x \in \R.$$
    Then, $T_{1,\B}^\ell (f)(x,\cdot) \in \gamma(H,\B)$ and by proceeding as above we show that
    $$\|T_{1,\B}^\ell (f)(x,\cdot)\|_{\gamma(H,\B)}
        \leq C \mathcal{M}(\|f\|_\B)(x), \quad x \in \R.$$
    Classical maximal theorems leads to
    $$\|T_{1,\B}^\ell (f)\|_{L^p(\R,\gamma(H,\B))}
        \leq C \|f\|_{L^p(\R,\B)}.$$
\end{proof}

\begin{proof}[Proof of Lemma~\ref{Lem T} for $T_{2,\B}^\ell$]
    By taking into account the following estimation (see \cite[(4.4) and (4.5)]{BCFST})
    $$\exp\left[-c\left(|x-y|^2\frac{1+e^{-2t}}{1-e^{-2t}}+|x+y|^2\frac{1-e^{-2t}}{1+e^{-2t}}\right)\right]
        \leq C\exp\left[-c(|x|+|y|)|x-y|\right],\quad x,y\in \mathbb{R}^n \text{ and } t>0,$$
    we get, by using \eqref{2.2} and \eqref{2.3} ,
    $$\left|t^\ell \partial_t^\ell W_t^\mathcal{H}(x,y)\right|
        \leq C\frac{e^{-c(|x|+|y|)|x-y|}e^{-c|x-y|^2/t}}{t^{n/2}},\quad x,y\in \mathbb{R}^n\mbox{ and }t>0.$$
    Hence, Minkowski's inequality allows us to write
    \begin{align*}
        & \|T_{2,\B}^\ell (f)(x,\cdot)\|_{L^2\left( (0,\infty), \frac{dt}{t}; \B \right)}
            \leq \int_{|x-y|>\rho (x)}\|f(y)\|_\mathbb{B} \left( \int_0^\infty \left|t^\ell \partial_t^\ell W_t^\mathcal{H}(x,y) \right|^2 \frac{dt}{t} \right)^{1/2} dy \\
        & \qquad \qquad \leq C \int_{|x-y|>\rho (x)}\|f(y)\|_\mathbb{B} e^{-c(|x|+|y|)|x-y|} \left( \int_0^\infty \frac{e^{-c|x-y|^2/t}}{t^{n+1}} dt \right)^{1/2} dy \\
        & \qquad \qquad \leq C \int_{|x-y|>\rho (x)}\|f(y)\|_\mathbb{B} \frac{e^{-c(|x|+|y|)|x-y|}}{|x-y|^n} dy \\
        & \qquad \qquad \leq C\sum_{m=0}^\infty  \frac{1}{(2^m\rho (x))^n} \int_{2^m \rho(x) <|x-y| \leq 2^{m+1}\rho (x)}\|f(y)\|_\mathbb{B} e^{-c(|x|+|y|)2^m \rho(x)}dy,\quad x\in \mathbb{R}^n.
    \end{align*}
    Note that if $|x-y|>\rho (x)$, then
    $$(|x|+|y|)\rho (x)\geq |x-y|\rho(x) > \rho(x)^2 = \frac{1}{4}, \quad \text{when } |x| \leq 1,$$
    and
    $$(|x|+|y|)\rho (x)\geq  \frac{|x|}{1+|x|} >  \frac{1}{2}, \quad \text{when } |x| > 1.$$
    Hence,
    \begin{align*}
        \|T_{2,\B}^\ell (f)(x,\cdot)\|_{L^2\left( (0,\infty), \frac{dt}{t}; \B \right)}
            &\leq C\sum_{m=0}^\infty  \frac{e^{-c2^m}}{(2^m\rho (x))^n}\int_{|x-y|\leq 2^{m+1}\rho (x)}\|f(y)\|_\mathbb{B}dy\\
            & \leq C\mathcal{M}(\|f\|_\mathbb{B})(x),\quad x\in \mathbb{R}^n,
    \end{align*}
    and we get that $T_{2,\B}^\ell (f)(x,\cdot) \in \gamma(H,\B)$, $x \in \R$, and
    $$\|T_{2,\B}^\ell (f)(x,\cdot)\|_{\gamma(H,\B)}
        \leq C \mathcal{M}(\|f\|_\B)(x), \quad x \in \R.$$
    Maximal theorem implies now that \eqref{2.5} holds for $T_{2,\B}^\ell$.
\end{proof}

We conclude that there exists $C>0$ independent of $f$ for which
\begin{equation}\label{2.10}
    \| \mathcal{G}_{\mathcal{H},\B}^\ell(f)\|_{L^p(\R,\gamma(H,\B))}
        \leq C \|f\|_{L^p(\R,\B)}.
\end{equation}

Our next objective is to establish that
$$\|f\|_{L^p(\mathbb{R}^n,\mathbb{B})}
    \leq C\|\mathcal{G}_{\mathcal{H},\B}^\ell(f)\|_{L ^p(\mathbb{R}^n,\gamma (H,\mathbb{B}))},$$
where $C>0$ does not depend on $f$.

In order to show this, we prove the following polarization formula.
\begin{Prop}\label{polarization}
    Let $\B$ be a UMD Banach space, $1<q<\infty$ and $k\in \mathbb{N}$.
    For every $f\in L^q(\mathbb{R}^n) \otimes \mathbb{B}$ and $g \in  L^{q'}(\mathbb{R}^n)\otimes \mathbb{B}^*$, we have that
    \begin{equation}\label{2.11}
        \int_{\mathbb{R}^n}\int_0^\infty \left\langle t^k\partial_t^k W_t^\mathcal{H}(g)(x),t^k\partial_t^k W_t^\mathcal{H}(f)(x)\right\rangle_{\mathbb{B}^*,\mathbb{B}}\frac{dt dx}{t}
            =\frac{\Gamma (2k)}{2^{2k}}\int_{\mathbb{R}^n}\langle g(x),f(x)\rangle_{\mathbb{B}^*,\mathbb{B}}dx.
    \end{equation}
\end{Prop}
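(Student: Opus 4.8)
The plan is to reduce the vector-valued identity \eqref{2.11} to a purely scalar one and then prove the latter via the Hermite expansion. First I would use that both sides of \eqref{2.11} are bilinear in $(f,g)$ over the algebraic tensor products $L^q(\mathbb{R}^n)\otimes\mathbb{B}$ and $L^{q'}(\mathbb{R}^n)\otimes\mathbb{B}^*$, so it suffices to verify \eqref{2.11} for elementary tensors $f=a\otimes b$, $g=c\otimes d$ with $a\in L^q(\mathbb{R}^n)$, $c\in L^{q'}(\mathbb{R}^n)$, $b\in\mathbb{B}$, $d\in\mathbb{B}^*$. For such $f$ and $g$ one has $W_t^\mathcal{H}(f)(x)=W_t^\mathcal{H}(a)(x)\,b$ and $W_t^\mathcal{H}(g)(x)=W_t^\mathcal{H}(c)(x)\,d$ (the pointwise $t$-derivatives being legitimate by kernel estimates of the type \eqref{2.4}), so both members of \eqref{2.11} factor as $\langle d,b\rangle_{\mathbb{B}^*,\mathbb{B}}$ times the corresponding scalar quantity. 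Hence \eqref{2.11} is equivalent to the scalar identity
$$\int_{\mathbb{R}^n}\int_0^\infty t^k\partial_t^kW_t^\mathcal{H}(c)(x)\,t^k\partial_t^kW_t^\mathcal{H}(a)(x)\,\frac{dt\,dx}{t}=\frac{\Gamma(2k)}{2^{2k}}\int_{\mathbb{R}^n}c(x)a(x)\,dx,$$
and the UMD/Banach structure plays no further role.

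I would then prove this scalar identity first when $a$ and $c$ belong to the linear span of the Hermite functions $\{\mathfrak{h}_m\}_{m\in\mathbb{N}^n}$ (a dense subspace of $L^r(\mathbb{R}^n)$ for every $1<r<\infty$). Writing $\lambda_m=2|m|+n$ and $c_m(h)=\int_{\mathbb{R}^n}\mathfrak{h}_m(y)h(y)\,dy$, for such finite combinations one has $\partial_t^kW_t^\mathcal{H}(a)=\sum_m(-\lambda_m)^ke^{-t\lambda_m}c_m(a)\mathfrak{h}_m$, and similarly for $c$. By the orthonormality of $\{\mathfrak{h}_m\}$ in $L^2(\mathbb{R}^n)$, integration in $x$ gives $\sum_m\lambda_m^{2k}e^{-2t\lambda_m}c_m(c)c_m(a)$; multiplying by $t^{2k-1}$ and integrating over $t\in(0,\infty)$ — an interchange that is harmless since the sum is finite — and using $\int_0^\infty t^{2k-1}e^{-2t\lambda_m}\,dt=\Gamma(2k)/(2\lambda_m)^{2k}$, the factors $\lambda_m^{2k}$ cancel and the left-hand side becomes $\frac{\Gamma(2k)}{2^{2k}}\sum_m c_m(c)c_m(a)$, which equals $\frac{\Gamma(2k)}{2^{2k}}\int_{\mathbb{R}^n}c\,a$ by Parseval's identity.

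Finally I would remove the restriction on $a$ and $c$ by density. The key input is the $L^r$-boundedness, $1<r<\infty$, of the scalar $g^k$-function $g^k(\{W_t^\mathcal{H}\}_{t>0})$ of the Hermite heat semigroup; combined with the Cauchy--Schwarz inequality in $t$ and H\"older's inequality in $x$ this shows both that the double integral on the left-hand side of the scalar identity converges absolutely for all $a\in L^q(\mathbb{R}^n)$, $c\in L^{q'}(\mathbb{R}^n)$ (so that Fubini applies) and that it is dominated by $\|g^k(\{W_t^\mathcal{H}\}_{t>0})(c)\|_{L^{q'}(\mathbb{R}^n)}\|g^k(\{W_t^\mathcal{H}\}_{t>0})(a)\|_{L^q(\mathbb{R}^n)}\leq C\|c\|_{L^{q'}(\mathbb{R}^n)}\|a\|_{L^q(\mathbb{R}^n)}$; the right-hand side of the scalar identity is obviously continuous on the same product. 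Since the identity already holds on the dense subspace spanned by Hermite functions, it extends to all of $L^q(\mathbb{R}^n)\times L^{q'}(\mathbb{R}^n)$, and by the first paragraph this yields \eqref{2.11}. The step demanding the most care is this last one: since $\{W_t^\mathcal{H}\}_{t>0}$ is not conservative (hence not a diffusion semigroup in Stein's sense), the boundedness of $g^k$ for every $k$ should be taken from the Littlewood--Paley theory specific to the Hermite setting (see \cite[Chapter 4]{Th} and \cite{Wrob}) rather than from Stein's general theorem.
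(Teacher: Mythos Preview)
Your proof is correct and follows essentially the same route as the paper: establish the scalar identity first for finite Hermite expansions via orthonormality and the elementary integral $\int_0^\infty t^{2k-1}e^{-2t\lambda}\,dt=\Gamma(2k)/(2\lambda)^{2k}$, extend to all of $L^q\times L^{q'}$ by density using the $L^r$-boundedness of the scalar Hermite $g^k$-function, and then pass to the tensor products by bilinearity. Your version is in fact slightly more explicit than the paper's (which leaves both the spectral computation and the tensor reduction as one-line remarks), and your caution about sourcing the scalar $g^k$-bound from the Hermite-specific literature rather than Stein's general diffusion theorem is well placed.
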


\begin{proof}
    This property can be proved by using standard spectral arguments. Indeed, if $f,g\in \mbox{span}\{{\mathfrak h}_m\}_{m\in\mathbb{N}^n}$, we have that
    \begin{align*}
        \int_{\mathbb{R}^n}\int_0^\infty t^k\partial_t^k W_t^\mathcal{H}(g)(x)t^k\partial_t^k W_t^\mathcal{H}(f)(x)\frac{dt}{t}dx
            = \frac{\Gamma (2k)}{2^{2k}}\int_{\mathbb{R}^n}g(x)f(x)dx.
    \end{align*}
    Since $\mbox{span}\{{\mathfrak h}_m\}_{m\in \mathbb{N}^n}$ is dense in $L^q(\mathbb{R}^n)$, by taking into account that, for every $1<r<\infty$,
    $$\|g^k(\{W_t^\mathcal{H}\}_{t>0})(f)\|_{L^r(\mathbb{R}^n)}
        = \|\mathcal{G}_{\mathcal{H},\mathbb{C}}^k(f)\|_{L^r(\mathbb{R}^n,H)}
        \leq C\|f\|_{L^r(\mathbb{R}^n)},\quad f\in L^r(\mathbb{R}^n),$$
    we conclude that
    \begin{equation}\label{2.12}
        \int_{\mathbb{R}^n}\int_0^\infty t^k\partial_t^k W_t^\mathcal{H}(f)(x)t^k\partial_t^k W_t^\mathcal{H}(g)(x)\frac{dt}{t}
            =\frac{\Gamma (2k)}{2^{2k}}\int_{\mathbb{R}^n}f(x)g(x)dx,
    \end{equation}
    for every $f\in L^q(\mathbb{R}^n)$ and $g\in L^{q'}(\mathbb{R}^n)$.

    From \eqref{2.12} we can immediately deduce that \eqref{2.11} holds for every $L^p(\mathbb{R}^n)\otimes \mathbb{B}$ and
    $g\in L^{p'}(\mathbb{R}^n)\otimes \mathbb{B}^*$.
\end{proof}

Assume now that $F \in L^p(\mathbb{R}^n)\otimes \mathbb{B}$. According to \cite[Lemma 2.3]{GLY} we have that
$$\|F\|_{L^p(\mathbb{R}^n,\mathbb{B})}
    =\sup_{\substack{ g\in L^{p'}(\mathbb{R}^n)\otimes \mathbb{B}^* \\ \|g\|_{L^{p'}(\mathbb{R}^n,\mathbb{B}^*)}\leq 1 }}
            \left|\int_{\mathbb{R}^n} \langle g(x),F(x)\rangle _{\mathbb{B}^*,\mathbb{B}}dx\right|.$$
Then, since $\B^*$ is also a UMD Banach space, by using Proposition~\ref{polarization}, \cite[Proposition 2.2]{HW} and \eqref{2.10} we obtain
\begin{align*}
    \|F\|_{L^p(\mathbb{R}^n,\mathbb{B})}
        & = \frac{2^{2\ell}}{\Gamma (2 \ell)}\sup_{\substack{ g\in L^{p'}(\mathbb{R}^n)\otimes \mathbb{B}^* \\ \|g\|_{L^{p'}(\mathbb{R}^n,\mathbb{B}^*)}\leq 1 }}
                \left|\int_0^\infty \int_{\mathbb{R}^n}\langle \mathcal{G}_{\mathcal{H},\B^*}^\ell(g)(x,t),\mathcal{G}_{\mathcal{H},\B}^\ell(F)(x,t)\rangle _{\mathbb{B}^*,\mathbb{B}}dx\frac{dt}{t}\right|\\
        & \leq C \sup_{\substack{ g\in L^{p'}(\mathbb{R}^n)\otimes \mathbb{B}^* \\ \|g\|_{L^{p'}(\mathbb{R}^n,\mathbb{B}^*)}\leq 1 }}
                \int_{\mathbb{R}^n}\left\| \mathcal{G}_{\mathcal{H},\B^*}^\ell(g)(x,\cdot) \right\|_{\gamma (H,\mathbb{B}^*)}\left\| \mathcal{G}_{\mathcal{H},\B}^\ell(F)(x,\cdot) \right\|_{\gamma (H,\mathbb{B})}dx\\
        & \leq C \sup_{\substack{ g\in L^{p'}(\mathbb{R}^n)\otimes \mathbb{B}^* \\ \|g\|_{L^{p'}(\mathbb{R}^n,\mathbb{B}^*)}\leq 1 }}
                \left\| \mathcal{G}_{\mathcal{H},\B^*}^\ell(g) \right\|_{L^{p'}(\mathbb{R}^n,\gamma (H,\mathbb{B}^*))}\left\| \mathcal{G}_{\mathcal{H},\B}^\ell(F) \right\|_{L^p(\mathbb{R}^n, \gamma (H,\mathbb{B}))}\\
        & \leq C \left\|\mathcal{G}_{\mathcal{H},\B}^\ell(F) \right\|_{L^p(\mathbb{R}^n, \gamma (H,\mathbb{B}))}.
\end{align*}

By taking into account that $L^p(\mathbb{R}^n)\otimes \mathbb{B}$ is dense in $L^p(\mathbb{R}^n,\mathbb{B})$ and \eqref{2.10} we conclude that
$$\|f\|_{L^p(\mathbb{R}^n,\mathbb{B})}
    \leq C \left\|\mathcal{G}_{\mathcal{H},\B}^\ell(f)\right\|_{L^p(\mathbb{R}^n, \gamma (H,\mathbb{B}))},$$
where $C>0$ does not depend on $f$.

\subsection{} \fbox{$(b) \Rightarrow (a)$} Let $\gamma \in \mathbb{R}\setminus \{0\} $. We define the imaginary power $\mathcal{H}^{i\gamma}$
of $\mathcal{H}$ on $L^2(\R)$ as follows
$$\mathcal{H}^{i\gamma}f
    = \sum_{k \in \mathbb{N}^n} (2|k|+n)^{i\gamma} c_k(f){\mathfrak h}_k, \quad f \in L^2(\R).$$
Plancherel theorem implies that $\mathcal{H}^{i\gamma}$ is bounded from $L^2(\R)$ into itself. Moreover, $\mathcal{H}^{i\gamma}$ is an spectral multiplier
of Laplace transform type (\cite[p. 121]{Ste1}) associated with the Hermite operator and $\mathcal{H}^{i\gamma}$ can be
extended from $L^2(\R) \cap L^p(\R)$ to $L^p(\R)$ as a bounded operator from $L^p(\R)$ into itself, for every $1<p<\infty$
(\cite[Theorem 1.1]{BCCR}, \cite[Theorem 3]{BCFR}) .
Let $\B$ be a Banach space. If $1<p<\infty$ we can define in a natural way $\mathcal{H}^{i\gamma}$ on $L^p(\R)\otimes \B$ as a linear operator from
$L^p(\R)\otimes \B$ into itself.
In \cite[Theorem 1.2]{BCCR} (see also \cite[Theorem 3]{BCFR}) it was established that $\B$ is UMD if and only if
$\mathcal{H}^{i\gamma}$, $\gamma \in \mathbb{R}\setminus \{0\} $, can be extended from $L^p(\R)\otimes \B$ to $L^p(\R,\B)$ as a bounded operator
from $L^p(\R,\B)$ into itself for some (equivalently, for every) $1<p<\infty$.

Suppose now that $(b)$ holds. In order to see that $\mathbb{B}$ is UMD we prove the following
vector valued version of an inequality in \cite[p. 63]{Ste1}.

\begin{Prop}\label{G1G2}
    Let $\B$ be a Banach space and $\gamma \in \mathbb{R}\setminus \{0\}$.
    There exists $C>0$ such that, for every $f\in \mbox{span}\{{\mathfrak h}_k\}_{k\in \mathbb{N}^n}\otimes \B$,
    $$\|\mathcal{G}_{\mathcal{H},\B}^1(\mathcal{H}^{i\gamma}(f))(x,\cdot)\|_{\gamma (H,\mathbb{B})}
        \leq C\|\mathcal{G}_{\mathcal{H},\B}^2(f)(x,\cdot)\|_{\gamma (H,\mathbb{B})},\quad x\in \mathbb{R}^n. $$
\end{Prop}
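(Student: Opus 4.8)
The plan is to express $\mathcal{G}_{\mathcal{H},\B}^1(\mathcal{H}^{i\gamma}f)(x,\cdot)$ as the composition of $\mathcal{G}_{\mathcal{H},\B}^2(f)(x,\cdot)$, seen as an element of $\gamma(H,\B)\subset L(H,\B)$, with a fixed bounded operator on $H$, and then to invoke the ideal property of $\gamma$-radonifying operators. First I would use that $\mathcal{H}^{i\gamma}$ is of Laplace transform type: from $\lambda^{i\gamma}=\dfrac{\lambda}{\Gamma(1-i\gamma)}\int_0^\infty e^{-s\lambda}s^{-i\gamma}\,ds$, valid for every $\lambda>0$, one gets for $f\in\mbox{span}\{{\mathfrak h}_k\}_{k\in\mathbb{N}^n}\otimes\B$ (where $W_s^{\mathcal{H}}f$ is a finite combination of exponentially decaying smooth functions, so that every manipulation below is legitimate, and where $\B$ is tacitly complexified) that
$$\mathcal{H}^{i\gamma}f=-\frac{1}{\Gamma(1-i\gamma)}\int_0^\infty \partial_s\big(W_s^{\mathcal{H}}f\big)\,s^{-i\gamma}\,ds.$$
Applying $t\,\partial_t W_t^{\mathcal{H}}$, using the semigroup identity $W_t^{\mathcal{H}}W_s^{\mathcal{H}}=W_{t+s}^{\mathcal{H}}$ and the change of variables $u=t+s$, and recalling that $\partial_u^2 W_u^{\mathcal{H}}f(x)=u^{-2}\mathcal{G}_{\mathcal{H},\B}^2(f)(x,u)$, I would arrive at the kernel representation
$$\mathcal{G}_{\mathcal{H},\B}^1(\mathcal{H}^{i\gamma}f)(x,t)=-\frac{1}{\Gamma(1-i\gamma)}\int_t^\infty t\,(u-t)^{-i\gamma}\,\partial_u^2 W_u^{\mathcal{H}}f(x)\,du=\int_0^\infty K(t,u)\,\mathcal{G}_{\mathcal{H},\B}^2(f)(x,u)\,\frac{du}{u},$$
where $K(t,u)=-\dfrac{1}{\Gamma(1-i\gamma)}\dfrac{t\,(u-t)^{-i\gamma}}{u}\,\chi_{\{0<t<u\}}$. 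Note that one $t$-derivative together with the factor $\mathcal{H}$ has been turned into a second $t$-derivative, which is exactly why $\mathcal{G}^1$ on the left is matched by $\mathcal{G}^2$ on the right.

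Next I would read this identity at the level of the operators $T_{(\cdot)}$ associated with $\B$-valued functions on $(0,\infty)$. Letting $S\in L(H,H)$ be the integral operator $(Sh)(u)=\int_0^\infty K(t,u)\,h(t)\,\frac{dt}{t}$, a Fubini computation (again harmless, since $\mathcal{H}^{i\gamma}f$ also lies in $\mbox{span}\{{\mathfrak h}_k\}\otimes\B$) gives, as elements of $L(H,\B)$,
$$T_{\mathcal{G}_{\mathcal{H},\B}^1(\mathcal{H}^{i\gamma}f)(x,\cdot)}=T_{\mathcal{G}_{\mathcal{H},\B}^2(f)(x,\cdot)}\circ S,\quad x\in\mathbb{R}^n.$$
Since $\mathcal{G}_{\mathcal{H},\B}^2(f)(x,\cdot)$ is a finite-rank $\B$-valued function it belongs to $\gamma(H,\B)$, and the ideal property of $\gamma$-radonifying operators (see \cite{Nee}) yields
$$\|\mathcal{G}_{\mathcal{H},\B}^1(\mathcal{H}^{i\gamma}f)(x,\cdot)\|_{\gamma(H,\B)}\leq\|S\|_{L(H,H)}\,\|\mathcal{G}_{\mathcal{H},\B}^2(f)(x,\cdot)\|_{\gamma(H,\B)},\quad x\in\mathbb{R}^n,$$
so it only remains to check that $S$ is bounded on $H=L^2((0,\infty),dt/t)$.

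Finally I would prove the $H$-boundedness of $S$ by a Schur-type estimate. Because $u-t>0$ on the support of $K$ we have $|(u-t)^{-i\gamma}|=1$, hence $|K(t,u)|=\dfrac{1}{|\Gamma(1-i\gamma)|}\dfrac{t}{u}\,\chi_{\{0<t<u\}}$, which depends only on $t/u$; in other words $S$ is dominated by a convolution operator on the multiplicative group $((0,\infty),dt/t)$. A direct computation gives, for all $t,u\in(0,\infty)$,
$$\int_0^\infty|K(t,u)|\,\frac{du}{u}=\frac{1}{|\Gamma(1-i\gamma)|}\int_t^\infty\frac{t}{u^2}\,du=\frac{1}{|\Gamma(1-i\gamma)|},\qquad\int_0^\infty|K(t,u)|\,\frac{dt}{t}=\frac{1}{|\Gamma(1-i\gamma)|}\int_0^u\frac{dt}{u}=\frac{1}{|\Gamma(1-i\gamma)|},$$
so Schur's lemma gives $\|S\|_{L(H,H)}\leq 1/|\Gamma(1-i\gamma)|$. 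Combining this with the previous display completes the proof, with $C$ depending only on $\gamma$.

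The routine parts — differentiation under the integral sign, the uses of Fubini's theorem, and the fact that finite-rank functions lie in $\gamma(H,\B)$ — are unproblematic here because everything is tested against $f$ in the span of the Hermite functions, where $W_t^{\mathcal{H}}f$ is a finite sum of exponentially decaying smooth functions. The two substantive ingredients are the subordination identity for $\mathcal{H}^{i\gamma}$ (which is what produces a clean kernel and also forces the pairing $\mathcal{G}^1$ versus $\mathcal{G}^2$) and the observation that the resulting kernel $K$ is, up to a unimodular factor, homogeneous of degree $0$ on $((0,\infty),dt/t)$, hence defines a bounded operator on $H$. I expect the main obstacle to be setting up the operator-level identity $T_{\mathcal{G}_{\mathcal{H},\B}^1(\mathcal{H}^{i\gamma}f)(x,\cdot)}=T_{\mathcal{G}_{\mathcal{H},\B}^2(f)(x,\cdot)}\circ S$ carefully enough that the ideal property of $\gamma(H,\B)$ can be applied directly.
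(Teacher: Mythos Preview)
Your proof is correct and follows essentially the same route as the paper's: your operator $S$ is exactly $-T_\gamma$ in the paper's notation (where $T_\gamma(h)(t)=\frac{1}{t}\int_0^t\phi_\gamma(t-s)h(s)\,ds$ with $\phi_\gamma(u)=u^{-i\gamma}/\Gamma(1-i\gamma)$), your factorization $T_{\mathcal{G}^1(\mathcal{H}^{i\gamma}f)(x,\cdot)}=T_{\mathcal{G}^2(f)(x,\cdot)}\circ S$ is the paper's identity $\mathcal{G}^1(\mathcal{H}^{i\gamma}f)(x,\cdot)=U\,\mathcal{G}^2(f)(x,\cdot)\,T_\gamma$ with the sign absorbed into $S$, and both conclude via the ideal property of $\gamma(H,\B)$. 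The only cosmetic differences are that the paper verifies the identity by a direct spectral computation on finite Hermite expansions rather than through the subordination integral, and bounds $T_\gamma$ on $H$ by H\"older plus Fubini instead of Schur's test --- both yielding the same constant $1/|\Gamma(1-i\gamma)|$.
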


\begin{proof}
    Let $f\in \mbox{span}\{{\mathfrak h}_k\}_{k\in \mathbb{N}^n}\otimes \B$. Then, $f=\sum_{k\in I}b_k{\mathfrak h}_k$, where $I$ is a finite subset  of
    $\mathbb{N}^n$ and $b_k\in \mathbb{B}$, $k\in I$.
    We introduce the operator $U\in L(\B)$ defined by $U(b)=-b$, $b\in \mathbb{B}$, and the operator $T_\gamma$ on $H$, given by
    $$T_\gamma (h)(t)
        =\frac{1}{t}\int_0^t\phi _\gamma (t-s)h(s)ds, \quad h\in H \mbox{ and }t>0,$$
    where $\phi _\gamma (u)=u^{-i\gamma}/\Gamma (1-i\gamma)$, $u>0$. The operator $T_\gamma \in L(H)$ and
    $\|T\|_{L(H)}\leq 1/\Gamma(1-i\gamma)$. Indeed, by using H\"older's inequality and Fubini's theorem we get
    \begin{align*}
        \|T_\gamma (h)\|_H
            &\leq \|\phi _\gamma \|_{L^\infty (0,\infty )}\left\{\int_0^\infty \left(\frac{1}{t}\int_0^t|h(s)|ds\right)^2\frac{dt}{t}\right\}^{1/2}\\
            &\leq \frac{1}{\Gamma(1-i\gamma)}\left\{\int_0^\infty |h(s)|^2\int_s^\infty\frac{dt}{t^2}ds\right\}^{1/2}
            = \frac{1}{\Gamma(1-i\gamma)} \|h\|_H,\quad h\in H.
    \end{align*}

    Let $x\in \mathbb{R}^n$. By considering $\mathcal{G}_{\mathcal{H},\B}^1(\mathcal{H}^{i\gamma}f)(x,\cdot)$ and $\mathcal{G}_{\mathcal{H},\B}^2(f)(x,\cdot)$
    as elements of $\gamma (H,\mathbb{B})$ we have that
    \begin{equation}\label{2.13}
        \mathcal{G}_{\mathcal{H},\B}^1(\mathcal{H}^{i\gamma}f)(x,\cdot)(h)
            =U\mathcal{G}_{\mathcal{H},\B}^2(f)(x,\cdot)T_\gamma (h),\quad h\in H.
    \end{equation}
    In fact, for every $h\in H$ and $S \in \B^*$, by using well-known properties of Laplace transform, we can write
    \begin{align*}
        \langle S, U \mathcal{G}_{\mathcal{H},\B}^2(f)(x,\cdot)T_\gamma (h) \rangle
            & = - \langle S, \mathcal{G}_{\mathcal{H},\B}^2(f)(x,\cdot)T_\gamma (h) \rangle \\
            & = - \int_0^\infty \langle S, \sum_{k \in I} b_k t^2 (2|k|+n)^2 e^{-t(2|k|+n)}{\mathfrak h}_k(x)  \rangle T_\gamma (h)(t)\frac{dt}{t}\\
            & = - \sum_{k \in I} \langle S,  b_k \rangle (2|k|+n)^2 {\mathfrak h}_k(x) \int_0^\infty  t  e^{-t(2|k|+n)} T_\gamma (h)(t)dt\\
            & = \left\langle S, - \sum_{k \in I}   b_k  (2|k|+n)^{i\gamma+1}{\mathfrak h}_k(x)  \int_0^\infty  e^{-t(2|k|+n)}h(t)dt \right\rangle.
    \end{align*}
    Hence,
    \begin{align*}
        U \mathcal{G}_{\mathcal{H},\B}^2(f)(x,\cdot)T_\gamma(h)
            &=-\sum_{k\in I}b_k (2|k|+n)^{i\gamma+1}{\mathfrak h}_k(x) \int_0^\infty e^{-t(2|k|+n)}h(t)dt, \quad h \in H.
    \end{align*}
    In a similar way we can see that
    \begin{align*}
        \mathcal{G}_{\mathcal{H},\B}^1(\mathcal{H}^{i\gamma}f)(x,\cdot)(h)
            &=-\sum_{k\in I}b_k (2|k|+n)^{i\gamma+1}{\mathfrak h}_k(x) \int_0^\infty e^{-t(2|k|+n)}h(t)dt, \quad h \in H.
    \end{align*}
    Thus \eqref{2.13} is established.

    By taking into account the ideal property for the $\gamma$-radonifying operators (\cite[Theorem 6.2]{Nee}) we conclude that
    $$\|\mathcal{G}_{\mathcal{H},\B}^1(\mathcal{H}^{i\gamma}(f))(x,\cdot)\|_{\gamma (H,\mathbb{B})}
        \leq \frac{1}{\Gamma(1-i\gamma)}\|\mathcal{G}_{\mathcal{H},\B}^2(f)(x,\cdot)\|_{\gamma (H,\mathbb{B})}.$$
\end{proof}

Let $\gamma \in \mathbb{R} \setminus \{0\}$ and $1<p<\infty$.
From $(b)$ and Proposition~\ref{G1G2} it follows, for every $f\in \mbox{span}\{{\mathfrak h}_k\}_{k\in \mathbb{N}^n}\otimes \B$,
$$\|\mathcal{H}^{i\gamma}(f)\|_{L^p(\mathbb{R}^n,\mathbb{B})}
    \leq C\|\mathcal{G}_{\mathcal{H},\B}^1(\mathcal{H}^{i\gamma}(f))\|_{L^p(\mathbb{R}^n,\gamma (H,\mathbb{B}))}
    \leq C \|\mathcal{G}_{\mathcal{H},\B}^2(f)\|_{L^p(\mathbb{R}^n,\gamma (H,\mathbb{B}))}
    \leq C\|f\|_{L^p(\mathbb{R}^n,\mathbb{B})}.$$
Since $\mbox{span}\{{\mathfrak h}_k\}_{k\in \mathbb{N}^n}\otimes \B$ is a dense subspace in  $L^p(\mathbb{R}^n,\mathbb{B})$,
$\mathcal{H}^{i\gamma}$ can be extended to $L^p(\mathbb{R}^n,\mathbb{B})$ as a bounded operator from $L^p(\mathbb{R}^n,\mathbb{B})$ into itself.
From \cite[Theorem 1.2]{BCCR} (\cite[Theorem 3]{BCFR}) we deduce that $\B$ is UMD.

\section{Proof of Theorem~\ref{mean} for the Schr\"odinger operator}\label{sec:Schorindger}

In this section we prove $(a) \Leftrightarrow (c)$ in Theorem~\ref{mean}. We assume that $n\geq 3$ and that the potential function $V$ satisfies the reverse
H\"older's inequality \eqref{RH} where $s>n/2$. In the proof of $(c)\Rightarrow (a)$ we will use
\cite[Theorem 3]{BCFR} where UMD Banach spaces are characterized
by the $L^p$-boundedness properties of the imaginary power $\mathcal{L}^{i\gamma}$, $\gamma\in \mathbb{R}\setminus \{0\}$, of the Schr\"odinger operator
$\mathcal{L}$.

\subsection{} \fbox{$(a) \Rightarrow (c)$} In order to show this result we can proceed as in the proof of $(a)\Rightarrow (b)$ by using in each moment
the suitable property for the heat kernel $W_t^\mathcal{L}(x,y)$, $x,y\in \mathbb{R}^n$ and $t>0$, of the Sch\"odinger semigroup.

As it is showed in the papers of Dziuba\'nski and Zienkiewicz (\cite{DZ1}, \cite{DZ2} and \cite{DZ3}),
Dziuba\'nski, Garrig\'os, Mart\'{\i}nez, Torrea  and Zienkiewicz (\cite{DGMTZ}) and Shen (\cite{Sh}), the function $\rho$ defined by
$$\rho (x)
    =\sup \left\{r>0:\frac{1}{r^{n-2}}\int_{B(x,r)}V(y)dy\leq 1\right\},\quad x\in \mathbb{R}^n,$$
plays an important role in the develop of the harmonic analysis in the Schr\"odinger setting. In the special case of the Hermite operator, we can see that
$$\rho (x)
    \sim \left\{\begin{array}{ll}
                \displaystyle \frac{1}{2},&|x|\leq 1,\\
                \displaystyle \frac{1}{1+|x|},&|x|\geq 1  .
             \end{array}
\right.$$
This function $\rho$ is usually called ``critical radius" of $x$, and we use it to split the operators in the local and global parts (see \eqref{7.1}).
The main properties of the function $\rho$ can be encountered in \cite[Lemma 1.4]{Sh}. We must apply repeatedly  that, for every $M>0$, $\rho (x)\sim \rho (y)$
provided that $x,y\in \mathbb{R}^n$ and $|x-y|\leq M\rho (x)$, where the equivalence constants depend only on $M$.
Also, according to \cite[Proposition 5]{DGMTZ},
we can find a sequence $\{x_k\}_{k\in \mathbb{N}}\subset \mathbb{R}^n$ such that:

$(i)$ $\overset{}{\underset{k\in \mathbb{N}}{\bigcup}} B(x_k,\rho (x_k))=\mathbb{R}^n$;

$(ii)$ for every $M>0$ there exists $m\in \mathbb{N}$ such that, for every $k\in \mathbb{N}$,
$$ \mbox{card }\{j\in\mathbb{N}: B(x_j,M\rho  (x_j))\cap B(x_k,M\rho (x_k))\not=\emptyset\}\leq m.$$
To complete the proof we need to use the following properties of $W_t^\mathcal{L}(x,y)$, $x,y\in \mathbb{R}^n$ and $t>0$.
All of them can be found, for instance in \cite[Section 2]{DGMTZ} and \cite[Section 2]{DZ2}.

\begin{Lem}\label{Lem Sch}
    Assume that $V\in RH_s$, where $s>n/2$. Then,

    $(i)$ For every $k ,N\in \mathbb {N}$, there exist $C,c>0$ for which
    $$\left|t^k \partial_t^k W_t^\mathcal{L}(x,y)\right|
        \leq C\frac{e^{-c|x-y|^2/t}}{t^{n/2}}\left(1+\frac{\sqrt{t}}{\rho (x)}+\frac{\sqrt{t}}{\rho (y)}\right)^{-N},\quad x,y\in \mathbb{R}^n\mbox{ and }t>0. $$

    $(ii)$ There exists a nonnegative function $w\in S(\mathbb{R}^n)$, the Schwartz functions space, and $\delta >0$, such that
    $$|G_{\sqrt{t}}(x-y)-W_t^\mathcal{L}(x,y)|
        \leq C\left(\frac{\sqrt{t}}{\rho (x)}\right)^\delta w_{\sqrt{t}}(x-y),\quad 0<t\leq \rho (x)^2, \ x,y\in \mathbb{R}^n.$$

    $(iii)$ If $w\in S(\mathbb{R}^n)$ there exist $\delta ,\beta >0$ such that
    $$\int_{\mathbb{R}^n}G_{\sqrt{t}}(x-y)V(y)dy
        \leq C\left\{\begin{array}{ll}
                        \displaystyle \frac{1}{t}\left(\frac{\sqrt{t}}{\rho (x)}\right)^\delta ,&0<t\leq \rho (x)^2,\\
                        &\\
                        \displaystyle \left(\frac{\sqrt{t}}{\rho (x)}\right)^{\beta +2-n},&t>\rho (x)^2.
                        \end{array}
    \right.$$
\end{Lem}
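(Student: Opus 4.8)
The plan is to obtain the three estimates from the two standard ingredients of the Schr\"odinger setting: the Feynman--Kac representation, which dominates $W_t^{\mathcal{L}}$ by the Euclidean heat kernel, and the quantitative control of $V$ at the scale $\rho$ coming from the reverse H\"older inequality \eqref{RH} together with Shen's properties of the critical radius (\cite[Lemma 1.4]{Sh}). All three estimates are contained in \cite[Section 2]{DGMTZ} and \cite[Section 2]{DZ2}; I indicate below how each one is produced.

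For $(i)$, I would start from the pointwise domination $0\leq W_t^{\mathcal{L}}(x,y)\leq G_{\sqrt{t}}(x-y)$, which is immediate from Feynman--Kac since $V\geq 0$ and which already yields the factor $t^{-n/2}e^{-c|x-y|^2/t}$. The extra decay $\big(1+\sqrt{t}/\rho(x)+\sqrt{t}/\rho(y)\big)^{-N}$ is the refinement of Kurata and Shen: when $\sqrt{t}\gtrsim\rho(x)$ the expected value of the accumulated potential along Brownian paths is large, which gives an arbitrary negative power of $\sqrt{t}/\rho(x)$. To pass from the kernel to its time derivatives I would exploit the analyticity of $\{W_z^{\mathcal{L}}\}$ in a sector together with Cauchy's formula, writing $t^k\partial_t^k W_t^{\mathcal{L}}(x,y)=\frac{k!\,t^k}{2\pi i}\oint_{|z-t|=t/2}\frac{W_z^{\mathcal{L}}(x,y)}{(z-t)^{k+1}}\,dz$ and inserting the size bound for $W_z^{\mathcal{L}}(x,y)$ with $|z|\sim t$; both the Gaussian and the $\rho$ factors are stable under this deformation.

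It is convenient to prove $(iii)$ before $(ii)$. For $(iii)$, I would decompose $\mathbb{R}^n$ into the dyadic annuli $|x-y|\sim 2^j\sqrt{t}$, on which $G_{\sqrt{t}}(x-y)\lesssim t^{-n/2}e^{-c4^j}$, and estimate $\int_{B(x,2^j\sqrt{t})}V$ using \eqref{RH} and the self-improvement of $\rho$ from \cite[Lemma 1.4]{Sh}: in the regime $2^j\sqrt{t}\lesssim\rho(x)$ one gets $\int_{B(x,2^j\sqrt{t})}V\lesssim (2^j\sqrt{t})^{n-2}(2^j\sqrt{t}/\rho(x))^{\delta}$, with a comparable bound of exponent $\beta+2-n$ in the opposite regime; summing the resulting geometric series in $j$ produces the two-regime estimate in the statement. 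For $(ii)$, I would use the perturbation (Duhamel) formula
$$G_{\sqrt{t}}(x-y)-W_t^{\mathcal{L}}(x,y)=\int_0^t\int_{\mathbb{R}^n}G_{\sqrt{t-s}}(x-z)\,V(z)\,W_s^{\mathcal{L}}(z,y)\,dz\,ds,$$
dominate $W_s^{\mathcal{L}}$ by $G_{\sqrt{s}}$, carry out the $s$-integration, and reduce matters to $(iii)$ applied at the intermediate scales together with the elementary fact that a convolution of Gaussian (or Schwartz) bumps of comparable scales is again controlled by such a bump; this yields the gain $(\sqrt{t}/\rho(x))^{\delta}w_{\sqrt{t}}(x-y)$ for $0<t\leq\rho(x)^2$.

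The only genuinely nonroutine point, and hence the main obstacle, is the bookkeeping of the $\rho$-dependent gains in $(ii)$ and $(iii)$: one must combine the reverse H\"older inequality with Shen's quantitative estimates for $\rho$ (the comparability $\rho(x)\sim\rho(y)$ for $|x-y|\lesssim\rho(x)$ and the controlled growth of $\rho(x)/\rho(y)$) in order to extract a genuinely positive exponent $\delta$. Once this is granted, the remaining manipulations are of the same Gaussian type already carried out in Section~\ref{sec:Hermite}, and one may alternatively just quote \cite[Section 2]{DGMTZ} and \cite[Section 2]{DZ2}.
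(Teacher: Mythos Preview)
The paper does not give a proof of this lemma at all: immediately before stating it, the authors write that ``All of them can be found, for instance in \cite[Section 2]{DGMTZ} and \cite[Section 2]{DZ2},'' and then move on. Your proposal is therefore more detailed than what the paper offers; the sketch you give (Feynman--Kac domination plus Kurata--Shen decay and Cauchy's formula for $(i)$, dyadic annuli and Shen's $\rho$-estimates for $(iii)$, Duhamel reduced to $(iii)$ for $(ii)$) is exactly the standard route followed in those references, and you correctly note at the end that one may simply quote them.
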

The polarization equality (see \eqref{2.11}) can be shown in the Schr\"odinger setting by using spectral arguments.

\subsection{} \fbox{$(c)\Rightarrow (a)$} Assume that $(c)$ holds for a certain $1<p<\infty$.

We denote by $E_\mathcal{L}(d\lambda )$ the spectral measure associated to the Schr\"odinger operator $\mathcal{L}$ . Then, we have that
$$W_t^\mathcal{L}(f)
    =\int_{[0,\infty )}e^{-\lambda t}E_\mathcal{L}(d\lambda)f,\quad f\in L^2(\mathbb{R}^n).$$
We can also write
$$W_t^\mathcal{L}(f)(x)
    =\int_{\mathbb{R}^n}W_t^\mathcal{L}(x,y)f(y)dy,\quad f\in L^2(\mathbb{R}^n)\mbox{ and }x\in \mathbb{R}^n.$$
Let $f,g\in L^2(\mathbb{R}^n)$. Then,
\begin{align*}
    \langle \partial_t W_t^\mathcal{L}(f)(x),g(x)\rangle
        &=\int_{\mathbb{R}^n}\int_{\mathbb{R}^n}\partial_t W_t^\mathcal{L}(x,y)f(y)dyg(x)dx
        =\partial_t \langle W_t^\mathcal{L}f(x),g(x)\rangle,\;\;\; x\in \mathbb{R}^n \mbox{ and }t>0.
\end{align*}
Note that by using Lemma~\ref{Lem Sch}, $(i)$, we can justified the derivation under the integral sign.
Indeed, Lemma~\ref{Lem Sch}, $(i)$, implies that
\begin{align*}
    & \int_{\mathbb{R}^n}\int_{\mathbb{R}^n}\left|\partial_t W_t^\mathcal{L}(x,y)\right\| |f(y)| |g(x)|dydx
        \leq C\int_{\mathbb{R}^n}\int_{\mathbb{R}^n}\frac{e^{-c|x-y|^2/t}}{t^{n/2 +1}}|f(y)\|g(x)|dydx\\
    & \qquad \qquad \leq \frac{C}{t}\int_{\mathbb{R}^n}\sup_{s>0} \left( \int_{\mathbb{R}^n}\frac{e^{-c|x-y|^2/s}}{s^{n/2}}|f(y)|dy \right) |g(x)|dx\\
    & \qquad \qquad \leq \frac{C}{t} \|g\|_{L^2(\mathbb{R}^n)} \left\{\int_{\mathbb{R}^n}\left(\sup_{s>0}\int_{\mathbb{R}^n}\frac{e^{-c|x-y|^2/s}}{s^{n/2}}|f(y)|dy\right)^2dx\right\}^{1/2}
    \leq \frac{C}{t},\quad t>0,
\end{align*}
because the maximal operator $W_*$ defined by
$$W_*(F)
    =\sup_{s>0}|W_s(F)|,\quad F\in L^2(\mathbb{R}^n),$$
is bounded from $L^2(\mathbb{R}^n)$ into itself.

On the other hand, by defining
$$D_tW_t^\mathcal{L}(f)
    =\lim_{h\rightarrow 0}\frac{W_{t+h}^\mathcal{L}(f)-W_t^\mathcal{L}(f)}{h},\quad \mbox{ on }L^2(\mathbb{R}^n),$$
we have that
$$D_tW_t^\mathcal{L}(f)
    =-\int_{[0,\infty )}\lambda e^{-\lambda t}E_\mathcal{L}(d\lambda )f,\quad t>0.$$
Hence, we conclude that, for every $t>0$,
$$D_tW_t^\mathcal{L}(f)(x)
    =\int_{\mathbb{R}^n}\partial_t W_t^\mathcal{L}(x,y)f(y)dy,\quad \mbox{ a.e. }x\in \mathbb{R}^n.$$
Then, for every $f\in L^2(\mathbb{R}^n)\otimes \mathbb{B}$, $\ell =1,2$, and $t>0$,
$$\mathcal{G}_{\mathcal{L},\mathbb{B}}^\ell (f)(\cdot ,t)
    =\int_{[0,\infty )}(-\lambda t)^\ell e^{-\lambda t}E_\mathcal{L}(d\lambda )f,$$
where the right hand side has the obvious meaning.

Let $\gamma \in \mathbb{R}\setminus\{0\}$. The imaginary power $\mathcal{L}^{i\gamma }$ of the operator $\mathcal{L}$ is defined by
$$\mathcal{L}^{i\gamma }(f)
    =\int_{[0,\infty )}\lambda ^{i\gamma }E_\mathcal{L}(d\lambda )f,\quad f\in L^2(\mathbb{R}^n),$$
and we extend $\mathcal{L}^{i\gamma }$ to $L^2(\mathbb{R}^n)\otimes \mathbb{B}$ in the natural way.

It is clear that, for every $t>0$,
$$\mathcal{G}_{\mathcal{L},\mathbb{B}}^1 (\mathcal{L}^{i\gamma }f)(\cdot ,t)
    =-\int_{[0,\infty )}t\lambda^{1+i\gamma }e^{-\lambda t}E_\mathcal{L}(d\lambda )f, \quad f\in L^2(\mathbb{R}^n)\otimes \mathbb{B}.$$

In the following we establish the analogous property of Proposition~\ref{G1G2} but in the Schr\"odinger setting.

\begin{Prop}\label{G1G2 Schr}
    Let $\B$ be a Banach space and $\gamma \in \mathbb{R}\setminus \{0\}$.
    There exists $C>0$ such that, for every $f\in S(\R)\otimes \B$,
    $$\|\mathcal{G}_{\mathcal{L},\B}^1(\mathcal{L}^{i\gamma}(f))(x,\cdot)\|_{\gamma (H,\mathbb{B})}
        \leq C\|\mathcal{G}_{\mathcal{L},\B}^2(f)(x,\cdot)\|_{\gamma (H,\mathbb{B})},\quad \text{a.e. }x\in \mathbb{R}^n. $$
\end{Prop}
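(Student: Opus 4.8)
The plan is to reproduce, in the Schrödinger setting, the structure of the proof of Proposition~\ref{G1G2}: the conclusion will follow once $\mathcal{G}^1_{\mathcal{L},\B}(\mathcal{L}^{i\gamma}f)(x,\cdot)$ is written as a composition of $\mathcal{G}^2_{\mathcal{L},\B}(f)(x,\cdot)$ with a bounded operator on $\B$ and a bounded operator on $H$, so that the ideal property of $\gamma$-radonifying operators does the rest. First I would record the spectral representations already displayed above: for $f\in S(\R)\otimes\B$,
$$\mathcal{G}^2_{\mathcal{L},\B}(f)(\cdot,t)=\int_{[0,\infty)}t^2\lambda^2e^{-t\lambda}E_\mathcal{L}(d\lambda)f,\qquad \mathcal{G}^1_{\mathcal{L},\B}(\mathcal{L}^{i\gamma}f)(\cdot,t)=-\int_{[0,\infty)}t\lambda^{1+i\gamma}e^{-t\lambda}E_\mathcal{L}(d\lambda)f,$$
noting that, since $-\Delta+V$ with $V\geq0$ and $n\geq3$ has no eigenvalue at $0$, the point mass $E_\mathcal{L}(\{0\})$ is null and the powers $\lambda^{1+i\gamma}$ cause no trouble at the bottom of the spectrum. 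I would then introduce, exactly as in Proposition~\ref{G1G2}, the operator $U\in L(\B)$ given by $U(b)=-b$, and the operator $T_\gamma\in L(H)$,
$$T_\gamma(h)(t)=\frac1t\int_0^t\phi_\gamma(t-s)h(s)\,ds,\qquad \phi_\gamma(u)=\frac{u^{-i\gamma}}{\Gamma(1-i\gamma)},$$
for which we already know that $\|T_\gamma\|_{L(H)}\leq|\Gamma(1-i\gamma)|^{-1}$.

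The heart of the argument is the operator identity
$$\mathcal{G}^1_{\mathcal{L},\B}(\mathcal{L}^{i\gamma}f)(x,\cdot)(h)=U\,\mathcal{G}^2_{\mathcal{L},\B}(f)(x,\cdot)\,T_\gamma(h),\qquad h\in H,$$
valid for almost every $x\in\R$. To prove it I would test against an arbitrary $S\in\B^*$ and an $h\in H$ with compact support in $(0,\infty)$ (dense in $H$); pairing with $S$ and using that $W_t^\mathcal{L}$ and $\mathcal{L}^{i\gamma}$ act as tensor extensions reduces both sides to scalar expressions governed by the spectral calculus of $\mathcal{L}$ applied to $\langle S,f\rangle\in S(\R)$. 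Then, interchanging the spectral integral with the $t$-integral by Fubini — the interchange being legitimized by the Gaussian bounds of Lemma~\ref{Lem Sch}$(i)$, which make the integrand absolutely integrable — together with the elementary Laplace transform identity $\int_0^\infty e^{-u\lambda}\phi_\gamma(u)\,du=\lambda^{i\gamma-1}$ for $\lambda>0$, one obtains
$$\int_0^\infty te^{-t\lambda}T_\gamma(h)(t)\,dt=\lambda^{i\gamma-1}\int_0^\infty e^{-t\lambda}h(t)\,dt,$$
so that inserting $\lambda^2\cdot\lambda^{i\gamma-1}=\lambda^{1+i\gamma}$ into the spectral integral turns the right-hand side of the identity into the left-hand side. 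As in the Hermite case one also checks, from the scalar $g$-function estimates or directly from Lemma~\ref{Lem Sch}$(i)$, that $\mathcal{G}^2_{\mathcal{L},\B}(f)(x,\cdot)$ — being $\B$-valued with range in a fixed finite-dimensional subspace — belongs to $\gamma(H,\B)$ for a.e.\ $x$, so the identity is between genuine elements of $\gamma(H,\B)$.

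Granting the identity, the ideal property of $\gamma$-radonifying operators (\cite[Theorem 6.2]{Nee}) yields
$$\|\mathcal{G}^1_{\mathcal{L},\B}(\mathcal{L}^{i\gamma}f)(x,\cdot)\|_{\gamma(H,\B)}\leq\|U\|_{L(\B)}\,\|T_\gamma\|_{L(H)}\,\|\mathcal{G}^2_{\mathcal{L},\B}(f)(x,\cdot)\|_{\gamma(H,\B)}\leq\frac{1}{|\Gamma(1-i\gamma)|}\,\|\mathcal{G}^2_{\mathcal{L},\B}(f)(x,\cdot)\|_{\gamma(H,\B)},$$
which is the assertion. The main obstacle, and the only genuine difference with the proof of Proposition~\ref{G1G2}, is that the eigenfunction expansion is now replaced by a true spectral integral: the algebraic manipulations that were trivial for finite sums must here be justified analytically, namely the interchange of $\int_{[0,\infty)}\cdots E_\mathcal{L}(d\lambda)$ with $\int_0^\infty\cdots dt$ and the passage to the almost-everywhere pointwise statement, for which Lemma~\ref{Lem Sch}$(i)$ (the Gaussian kernel bounds with critical-radius decay) together with the $L^2$-boundedness of the heat maximal operator $W_*$ are exactly the tools needed, with the mild additional point that nothing degenerates at $\lambda=0$ because $\mathcal{L}$ has no zero eigenvalue.
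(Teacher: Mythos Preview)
Your proposal is correct and follows essentially the same approach as the paper: the same operators $U$ and $T_\gamma$, the same factorization identity, and the ideal property of $\gamma(H,\B)$ to conclude. The paper's implementation of the Fubini/a.e.\ step you rightly flag as the main obstacle is to pair against an arbitrary $g\in L^2(\R)$ so as to work with the scalar spectral measure $\mu_{f,g}(A)=\langle E_\mathcal{L}(A)f,g\rangle$, apply ordinary Fubini there, and then pass through a countable dense subset of $C_c(0,\infty)\subset H$ to obtain a single exceptional null set valid for all $h\in H$.
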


\begin{proof}
    Let $h\in L^2((0,\infty ),dt/t)$ such that $\mbox{supp}\;h\subset (a,b)$, $0<a<b<\infty $, and let $f,g\in L^2(\mathbb{R}^n)$.
    According to Lemma~\ref{Lem Sch}, $(i)$, we get as above
    \begin{align*}
        & \int_0^\infty \int_{\mathbb{R}^n}|\mathcal{G}_{\mathcal{L},\mathbb{B}}^1 (\mathcal{L}^{i\gamma }f)(x,t)g(x)|dx|h(t)|\frac{dt}{t}\\
        & \qquad \qquad \leq C\int_0^\infty \int_{\mathbb{R}^n}\sup _{s>0} \left(\int_{\mathbb{R}^n}\frac{e^{-c|x-y|^2/t}}{s^{n/2}}|\mathcal{L}^{i\gamma }f(y)|dy \right)|g(x)|dx|h(t)|\frac{dt}{t}\\
        & \qquad \qquad \leq C\|g\|_{L^2(\mathbb{R}^n)}\|W_*(\mathcal{L}^{i\gamma }f)\|_{L^2(\mathbb{R}^n)}\int_a^b|h(t)|\frac{dt}{t}<\infty .
    \end{align*}

    We can write
    \begin{align*}
        & \int_{\mathbb{R}^n}\int_0^\infty \mathcal{G}_{\mathcal{L},\mathbb{B}}^1 (\mathcal{L}^{i\gamma }f)(x,t)h(t)\frac{dt}{t}g(x)dx\\
        &\qquad \qquad = \int_0^\infty h(t)\int_{\mathbb{R}^n}\mathcal{G}_{\mathcal{L},\mathbb{B}}^1 (\mathcal{L}^{i\gamma }f)(x,t)g(x) dx\frac{dt}{t}\\
        &\qquad \qquad =-\int_0^\infty h(t)\int_{\mathbb{R}^n}\left(\int_{[0,\infty )}t\lambda ^{1+i\gamma }e^{-\lambda t}E_\mathcal{L}(d\lambda )f\right)(x)g(x)dx\frac{dt}{t}\\
        &\qquad \qquad =-\int_0^\infty \int_{[0,\infty )}\lambda ^{1+i\gamma }e^{-\lambda t}d\mu _{f,g}(\lambda )h(t)dt,
    \end{align*}
    where $\mu _{f,g}$ represents the complex measure defined by
    $$\mu _{f,g}(A)
        =\langle E_\mathcal{L}(A)f,g\rangle,$$
    for every  Borel set $A$ in $[0,\infty )$. If $|\mu _{f,g}|$ denotes the total variation measure of $\mu _{f,g}$, then
    $$\int_0^\infty \int_{[0,\infty )}|\lambda ^{1+i\gamma }|e^{-\lambda t}d|\mu _{f,g}|(\lambda )|h(t)|dt
        \leq C|\mu _{f,g}|([0,\infty ))\int_0^\infty |h(t)|\frac{dt}{t}<\infty .$$
    Hence, we have that
    \begin{align*}
        & \int_{\mathbb{R}^n}\int_0^\infty \mathcal{G}_{\mathcal{L},\mathbb{B}}^1 (\mathcal{L}^{i\gamma }f)(x,t)h(t)\frac{dt}{t}g(x)dx\\
        &\qquad \qquad =-\int_{[0,\infty )}\lambda ^{1+i\gamma }\int_0^\infty e^{-\lambda t}h(t)dtd\mu _{f,g}(\lambda )\\
        &\qquad \qquad =-\int_{[0,\infty )}\lambda ^2\int_0^\infty t^2e^{-\lambda t}T_\gamma (h)(t)\frac{dt}{t}d\mu _{f,g}(\lambda ),
    \end{align*}
    where
    $$T_\gamma (h)(t)=\frac{1}{t}\int_0^t\phi _\gamma (t-s)h(s)ds, \quad t\in (0,\infty ),$$
    and $\phi _\gamma (u)=u^{-i\gamma}/\Gamma (1-i\gamma)$, $u\in (0,\infty )$. Since
    $$\int_{[0,\infty )}\int_0^\infty(\lambda t)^2e^{-\lambda t}|T_\gamma (h)(t)|\frac{dt}{t}d|\mu _{f,g}|(\lambda )
        <\infty,$$
    we can write
    \begin{align*}
        \int_{\mathbb{R}^n}\int_0^\infty \mathcal{G}_{\mathcal{L},\mathbb{B}}^1 (\mathcal{L}^{i\gamma }f)(x,t)h(t)\frac{dt}{t}g(x)dx
            &=-\int_0^\infty T_\gamma (h)(t)\int_{[0,\infty )}(\lambda t)^2e^{-\lambda t}d\mu _{f,g}(\lambda )\frac{dt}{t}\\
        & =-\int_0^\infty T_\gamma (h)(t)\int_{\mathbb{R}^n}g(x)\mathcal{G}_{\mathcal{L},\mathbb{C}}^2(f)(x,t)dx\frac{dt}{t}\\
        &  =-\int_{\mathbb{R}^n}g(x)\int_0^\infty T_\gamma (h)(t)\mathcal{G}_{\mathcal{L},\mathbb{C}}^2(f)(x,t)\frac{dt}{t}dx.
    \end{align*}
    The last interchange is justified because
    $$\int_0^\infty \int_{\mathbb{R}^n}|g(x)\|\mathcal{G}_{\mathcal{L},\mathbb{C}}^2(f)(x,t)|dxT_\gamma (h)(t)\frac{dt}{t}
        \leq C\|h\|_H\int_a^\infty \int_{\mathbb{R}^n}|g(x)|W_*(|f|)(x)dx\frac{dt}{t^2}<\infty.$$
    We have taken into account that, since ${\rm supp}\;h\subset (a,b)$, it follows that $T_\gamma (h)(t)=0$, when $t\in (0,a)$.

    We conclude that
    $$\int_0^\infty \mathcal{G}_{\mathcal{L},\mathbb{C}}^1 (\mathcal{L}^{i\gamma }f)(x,t)h(t)\frac{dt}{t}
        =-\int_0^\infty T_\gamma (h)(t)\mathcal{G}_{\mathcal{L},\mathbb{C}}^2(f)(x,t)\frac{dt}{t},\quad \mbox{a.e. }x\in \mathbb{R}^n.$$
    It is well-known that the space $C_c(0,\infty )$ of continuous functions with compact support is dense in $H$.
    Moreover, since $H$ is separable, there exists a numerable set $\mathcal{A}\subset C_c(0,\infty )$ that is dense in $H$.

    We define ${\bf N}\subset \mathbb{R}^n$ consisting on those $x\in\mathbb{R}^n$ for which
    $$\int_0^\infty \mathcal{G}_{\mathcal{L},\mathbb{C}}^1 (\mathcal{L}^{i\gamma }f)(x,t)h(t)\frac{dt}{t}
        =-\int_0^\infty T_\gamma (h)(t)\mathcal{G}_{\mathcal{L},\mathbb{C}}^2(f)(x,t)\frac{dt}{t},\quad h\in \mathcal{A}.$$
    We have that $|\mathbb{R}^n\setminus {\bf N}|=0$. then, for every $h\in H$,
    $$\int_0^\infty \mathcal{G}_{\mathcal{L},\mathbb{C}}^1 (\mathcal{L}^{i\gamma }f)(x,t)h(t)\frac{dt}{t}
        =-\int_0^\infty T_\gamma (h)(t)\mathcal{G}_{\mathcal{L},\mathbb{C}}^2(f)(x,t)\frac{dt}{t},\quad x\in {\bf N} .$$
    Hence, if $f\in L^2(\mathbb{R}^n)\otimes \mathbb{B}$, there exists $\Omega \subset \mathbb{R}^n$ such that $|\mathbb{R}^n\setminus {\bf N}|=0$ and
    $$\int_0^\infty \mathcal{G}_{\mathcal{L},\mathbb{B}}^1 (\mathcal{L}^{i\gamma }f)(x,t)h(t)\frac{dt}{t}
        =-\int_0^\infty T_\gamma (h)(t)\mathcal{G}_{\mathcal{L},\mathbb{B}}^2(f)(x,t)\frac{dt}{t},\quad x\in \Omega ,$$
    for every $h\in H$. By defining $Ub=-b$, $b\in \mathbb{B}$, we have that, as elements of $\gamma (H,\mathbb{B})$,
    $$\mathcal{G}_{\mathcal{L},\mathbb{B}}^1(\mathcal{L}^{i\gamma }f)(x,\cdot)
        =U\mathcal{G}_{\mathcal{L},\mathbb{B}}^2(f)(x,\cdot)T_\gamma ,\quad \mbox{a.e. }x\in \mathbb{R}^n,$$
    for every $f\in S(\mathbb{R}^n)\otimes \mathbb{B}$.

    By taking into account the ideal property of $\gamma(H,\mathbb{B})$ (\cite[Theorem 6.2]{Nee}), and that the operators $U$ and $T_\gamma$
    are bounded in $\B$ and $H$, respectively,  we conclude the proof of this proposition.
\end{proof}

Finally, from the equivalences in $(c)$ and Proposition~\ref{G1G2 Schr}, we have that, for every $f\in S(\mathbb{R}^n)\otimes \mathbb{B}$,
\begin{align*}
    \|\mathcal{L}^{i\gamma }\|_{L^p(\mathbb{R}^n,\mathbb{B})}
        &\leq C\|\mathcal{G}_{\mathcal{L},\mathbb{B}}^1(\mathcal{L}^{i\gamma }f)\|_{L^p(\mathbb{R}^n,\gamma (H,\mathbb{B}))}
        \leq C\|\mathcal{G}_{\mathcal{L},\mathbb{B}}^2(f)\|_{L^p(\mathbb{R}^n,\gamma (H,\mathbb{B}))}\leq C\|f\|_{L^p(\mathbb{R}^n,\mathbb{B})}.
\end{align*}

Since $S(\mathbb{R}^n)\otimes \mathbb{B}$ is dense in $L^p(\mathbb{R}^n,\mathbb{B})$, we have proved that the operator
$\mathcal{L}^{i\gamma }$ can be extended to $L^p(\mathbb{R}^n,\mathbb{B})$ as a bounded operator from $L^p(\mathbb{R}^n,\mathbb{B})$ into itself.
Then, according to \cite[Theorem 3]{BCFR}, $\mathbb{B}$ is UMD.

\section{Proof of Theorem~\ref{mean} for Laguerre operators}\label{sec:Laguerre}

In this section we prove the equivalence $(a)\Leftrightarrow (d)$ in Theorem~\ref{mean}.

Suppose that $\mathbb{B}$ is a UMD Banach space. Let $\ell =1,2$ and $1<p<\infty$.

We are going to see that
\begin{equation}\label{gLaguerre}
    \|\mathcal{G}_{\mathcal{L}_\alpha ,\mathbb{B}}^\ell (f)\|_{L^p((0,\infty ),\gamma (H,\mathbb{B}))}
        \leq C\|f\|_{L^p((0,\infty ),\mathbb{B})},\quad f\in L^p((0,\infty ),\mathbb{B}).
\end{equation}

In order to show \eqref{gLaguerre} we take advantage from $(a)\Rightarrow (b)$ after connecting
$\mathcal{G}_{\mathcal{L}_\alpha ,\mathbb{B}}^\ell $ and $\mathcal{G}_{\mathcal{H},\mathbb{B}}^\ell $ in a suitable way.

Note firstly that
\begin{equation}\label{20.1}
    W_t^{\mathcal{L}_\alpha}(x,y)
        =W_t^{\mathcal{H}/2}(x,y)g_\alpha \left(\frac{2xye^{-t}}{1-e^{-2t}}\right),\quad x,y,t\in (0,\infty ),
\end{equation}
where $W_t^{\mathcal{H}/2}(x,y)$ denotes the heat kernel associated with the operator $\mathcal{H}/2$ in dimension one,
that is, for each $x,y\in \mathbb{R}\mbox{ and }t>0$,
$$W_t^{\mathcal{H}/2}(x,y)
    =\left(\frac{e^{-t}}{\pi (1-e^{-2t})}\right)^{1/2}\exp\left[-\frac{1}{4}\left((x-y)^2\frac{1+e^{-t}}{1-e^{-t}}+(x+y)^2\frac{1-e^{-t}}{1+e^{-t}}\right)\right],$$
and $g_\alpha$ is defined by
$$g_\alpha (z)
    =\sqrt{2\pi z}e^{-z}I_\alpha (z), \quad z\in (0,\infty ).$$

To make the reading of the following lines easier, from now on we consider $\xi =\xi(x,y,t)=\frac{2xye^{-t}}{1-e^{-2t}}$,  $x,y,t\in (0,\infty )$.

We have, for every $x,y,t\in (0,\infty )$,
\begin{equation}\label{deriv1}
    \partial_t W_t^{\mathcal{L}_\alpha} (x,y)
        =\partial_t W_t^{\mathcal{H}/2}(x,y)g_\alpha (\xi)
            -W_t^{\mathcal{H}/2}(x,y)\Big(\frac{d}{dz}g_\alpha(z)\Big)_{\big|z=\xi }\frac{\xi (1+e^{-2t})}{1-e^{-2t}},
\end{equation}
and
\begin{align}\label{deriv2}
    \partial_t^2 W_t^{\mathcal{L}_\alpha }(x,y)
        =&\partial_t^2 W_t^{\mathcal{H}/2}(x,y)g_\alpha (\xi)-2\partial_t W_t^{\mathcal{H}/2}(x,y)\Big(\frac{d}{dz}g_\alpha(z)\Big)_{\big|z=\xi }\frac{\xi (1+e^{-2t})}{1-e^{-2t}}\nonumber\\
        &+W_t^{\mathcal{H}/2}(x,y)\left\{\Big(\frac{d^2}{dz^2}g_\alpha(z)\Big)_{\big|z=\xi }\frac{\xi^2(1+e^{-2t})^2}{(1-e^{-2t})^2}+\Big(\frac{d}{dz}g_\alpha(z)\Big)_{\big|z=\xi }\frac{\xi(1+6e^{-2t}+e^{-4t})}{(1-e^{-2t})^2}\right\}.
\end{align}

By taking into account that $\frac{d}{dz}(z^{-\alpha }I_\alpha (z))=z^{-\alpha }I_{\alpha +1}(z)$, $z\in (0,\infty )$ (\cite[p. 110]{Leb}), we get
\begin{equation}\label{dg}
    \frac{d}{dz}g_\alpha (z)
        =-g_\alpha (z)+\frac{2\alpha +1}{2z}g_\alpha (z)+g_{\alpha +1}(z),\quad z\in (0,\infty ),
\end{equation}
and
\begin{equation}\label{d2g}
    \frac{d^2}{dz^2}g_\alpha (z)
        =\left(1-\frac{2\alpha +1}{z}+\frac{4\alpha^2-1}{4z^2}\right)g_\alpha (z)+2\left(\frac{\alpha -1}{z}-1\right)g_{\alpha +1}(z)+g_{\alpha +2}(z),
        \ z\in (0,\infty ).
\end{equation}

Since  $I_\alpha (z)\sim z^\alpha $, as $z\rightarrow 0^+$ (\cite[p. 108]{Leb}), we deduce from \eqref{dg} and \eqref{d2g} that, for $k=0,1,2$,
\begin{equation}\label{g1}
    \Big|z^k\frac{d^k}{dz^ k}g_\alpha (z)\Big|
        \leq C, \quad z\in (0,1).
\end{equation}
On the other hand, according to \cite[p. 123]{Leb}, for every $m\in \mathbb{N}$,
\begin{equation}\label{asimptoticgalpha}
    g_\alpha (z)
        =\sum_{r=0}^m \frac{(-1)^r[\alpha ,r]}{(2z)^{r}}+\mathcal{O}\left(\frac{1}{z^{m+1}}\right), \quad z\in (0,\infty),
\end{equation}
where $[\alpha ,0]=1$ and
$$[\alpha ,r]
    =\frac{(4\alpha ^2-1)(4\alpha ^2-3^2)\cdots (4\alpha ^2-(2r-1)^2)}{2^{2r}\Gamma (r+1)},\quad r\in \mathbb{N}, \ r\geq 1 .$$
Then, for $k=1,2$,
\begin{equation}\label{g2}
    \Big|z^k\frac{d^k}{dz^ k}g_\alpha (z)\Big|
        \leq \frac{C}{z}, \quad z\in (0,\infty ).
\end{equation}
Indeed, by using property \eqref{asimptoticgalpha} in \eqref{dg} and \eqref{d2g} we get
$$\frac{d}{dz}g_\alpha (z)
    =\frac{[\alpha ,1]-[\alpha +1,1]+2\alpha +1}{2z}+\mathcal{O}\Big(\frac{1}{z^2}\Big)=\mathcal{O}\Big(\frac{1}{z^2}\Big),\quad z\in (0,\infty).$$
and
\begin{align*}
    \frac{d^2}{dz^2}g_\alpha (z)
        =&\left(1-\frac{[\alpha ,1]}{2}+[\alpha +1,1]-\frac{[\alpha +2,1]}{2}\right)\frac{1}{z}\\
        &+\left(\frac{[\alpha ,2]}{4}+(2\alpha +3)\frac{[\alpha ,1]}{2}-(\alpha+1)[\alpha +1,1]-\frac{[\alpha+1,2]}{2}+\frac{[\alpha +2,2]}{4}\right)\frac{1}{z^2}\\
        &+\mathcal{O}\Big(\frac{1}{z^3}\Big)=\mathcal{O}\Big(\frac{1}{z^3}\Big),\quad z\in (0,\infty).
\end{align*}
Then, \eqref{g2} holds for $k=1,2$.

From \eqref{deriv1} and \eqref{deriv2} by using \eqref{2.4} (note that estimate \eqref{2.4} also holds for $\mathcal{H}/2$ instead of $\mathcal{H}$) we obtain,
for every $x,y,t\in (0,\infty )$,
\begin{align}\label{principal}
    & \left|t^\ell\partial_t^\ell W_t^{\mathcal{L}_\alpha }(x,y)\right|
        \leq  C \Big\{ \left|t^\ell\partial_t^\ell W_t^{\mathcal{H}/2}(x,y)\right| \left| g_\alpha(\xi) \right|
                            +\left|t^\ell\partial_t^{\ell-1} W_t^{\mathcal{H}/2}(x,y)\right|\left|\Big(\frac{d}{dz}g_\alpha (z)\Big)_{\big|z=\xi}\right|\frac{\xi}{1-e^{-2t}}\nonumber\\
    & \qquad  \qquad +(\ell-1)t^2 W_t^{\mathcal{H}/2}(x,y)\left[\left|\Big(\frac{d^2}{dz^2}g_\alpha (z)\Big)_{\big|z=\xi}\right|\frac{\xi ^2}{(1-e^{-2t})^2}+\left|\Big(\frac{d}{dz}g_\alpha (z)\Big)_{\big|z=\xi}\right|\frac{\xi}{(1-e^{-2t})^2}\right] \Big\}\nonumber\\
    & \qquad \leq C\frac{e^{-t/3}e^{-c|x-y|^2/t}}{\sqrt{t}}
            \left\{ \left| g_\alpha(\xi) \right|
                        +\frac{t\xi}{1-e^{-2t}} \left|\Big(\frac{d}{dz}g_\alpha (z)\Big)_{\big|z=\xi}\right|
                        +(\ell -1)\frac{(t\xi )^2}{(1-e^{-2t})^2}\left|\Big(\frac{d^2}{dz^2}g_\alpha (z)\Big)_{\big|z=\xi}\right|\right\}.
\end{align}

Now,  \eqref{g1} implies that
\begin{equation}\label{B3menor}
    \left|t^\ell\partial_t^\ell W_t^{\mathcal{L}_\alpha }(x,y)\right|\leq C \frac{e^{-c|x-y|^2/t}}{\sqrt{t}},\quad x,y,t\in (0,\infty )\mbox{  and } \xi \leq 1.
\end{equation}
We also observe that
\begin{align*}
    & \exp\left[-c\left(\frac{1+e^{-t}}{1-e^{-t}}|x-y|^2+\frac{1-e^{-t}}{1+e^{-t}}|x+y|^2\right)\right]
        = \exp\left[-2c\left(\frac{1+e^{-2t}}{1-e^{-2t}}(x^2+y^2)-8\frac{e^{-t}}{1-e^{-2t}}xy\right)\right]\\
    & \qquad \qquad \leq \exp\left[-c\frac{1+e^{-2t}}{1-e^{-2t}}(x^2+y^2)\right],\quad x,y,t\in (0,\infty )\mbox{ and }\xi \leq 1.
\end{align*}
Then we get
\begin{equation}\label{B3bisH}
    \left|t^\ell \partial_t^\ell W_t^{\mathcal{H}/2}(x,y)\right|
        \leq C\frac{e^{-c(x^2+y^2)/t}}{\sqrt{t}},\quad x,y,t\in (0,\infty ), \ \xi \leq 1,
\end{equation}
and
\begin{equation}\label{B3bis}
    \left|t^\ell\partial_t^\ell W_t^{\mathcal{L}_\alpha }(x,y)\right|
        \leq C\frac{e^{-c(x^2+y^2)/t}}{\sqrt{t}},\quad x,y,t\in (0,\infty )\mbox{  and } \xi \leq 1.
\end{equation}

On the other hand, from \eqref{g2} and \eqref{principal} it follows that,
\begin {equation}\label{B3mayor}
    \left|t^\ell\partial_t^\ell W_t^{\mathcal{L}_\alpha }(x,y)\right|
        \leq C\frac{xye^{-c|x-y|^2/t}}{t^{3/2}},\quad x,y,t\in (0,\infty )\mbox{ and }\xi \geq 1.
\end{equation}
Moreover, by taking into account \eqref{2.4}, \eqref{20.1}, \eqref{asimptoticgalpha} and \eqref{g2}, we obtain that
\begin{align*}
    & \left|t^\ell\partial_t^\ell \left[W_t^{\mathcal{L}_\alpha }(x,y)-W_t^{\mathcal{H}/2}(x,y)\right]\right|\\
        & \qquad \leq \left|t^\ell\partial_t^\ell [W_t^{\mathcal{H}/2}(x,y)](g_\alpha (\xi )-1)\right|+\left|t^\ell\partial_t^{\ell-1} W_t^{\mathcal{H}/2}(x,y)\right|\left|\Big(\frac{d}{dz}g_\alpha (z)\Big)_{\big|z=\xi}\right|\frac{\xi}{1-e^{-2t}}\\
        & \qquad \qquad +(\ell-1)t^2 W_t^{\mathcal{H}/2}(x,y)\left\{\left|\Big(\frac{d^2}{dz^2}g_\alpha (z)\Big)_{\big|z=\xi}\right|\frac{\xi ^2}{(1-e^{-2t})^2}+\left|\Big(\frac{d}{dz}g_\alpha (z)\Big)_{\big|z=\xi}\right|\frac{\xi}{(1-e^{-2t})^2}\right\}\\
        & \qquad \leq C\frac{e^{-t/3}e^{-c|x-y|^2/t}}{\xi\sqrt{t}},\quad x,y,t\in (0,\infty ).
\end{align*}

Hence, we get
\begin{equation}\label{B4}
    \left|t^\ell\partial_t^\ell [W_t^{\mathcal{L}_\alpha }(x,y)-W_t^{\mathcal{H}/2}(x,y)]\right|
        \leq C\frac{e^{-t/3}e^{-c|x-y|^2/t}}{\xi^{1/4}\sqrt{t}}\leq C\frac{e^{-c|x-y|^2/t}}{(xyt)^{1/4}}, \quad x,y,t\in (0,\infty )\mbox{ and } \xi \geq 1.
\end{equation}

Let now $f\in L^p((0,\infty ),\mathbb{B})$. Let us denote by $\tilde{f}$ the extension of $f$ to $\mathbb{R}$ which satisfies $\tilde{f}(x)=0$, $x\leq 0$.
By defining $\mathcal{G}_{\mathcal{H}/2,\mathbb{B}}^\ell (\tilde{f})$ in the obvious way, we have that
\begin{align}\label{gdif}
    & \|\mathcal{G}_{\mathcal{L}_\alpha ,\mathbb{B}}^\ell (f)(x,\cdot )-\mathcal{G}_{\mathcal{H}/2,\mathbb{B} }^\ell(\tilde{f})(x,\cdot )\|_{L^2\big((0,\infty ),\frac{dt}{t};\mathbb{B}\big)}\nonumber\\
        & \qquad \qquad \leq\int_{(0,x/2)\cup (2x,\infty )}\|f(y)\|_\mathbb{B}\left(\Big\|t^\ell\partial_t^\ell W_t^{\mathcal{L}_\alpha }(x,y)\Big\|_H+\Big\|t^\ell\partial_t^\ell W_t^{\mathcal{H}/2}(x,y)\Big\|_H\right)dy\nonumber\\
        & \qquad \qquad \qquad + \int_{x/2}^{2x}\|f(y)\|_\B\Big\|t^\ell\partial_t^\ell \left[W_t^{\mathcal{L}_\alpha }(x,y)-W_t^{\mathcal{H}/2}(x,y)\right]\Big\|_Hdy\nonumber\\
        & \qquad \qquad = T_1(f)(x)+T_2(f)(x),\quad x\in (0,\infty ).
\end{align}

By using \eqref{2.4}, \eqref{B3menor} and \eqref{B3mayor} we obtain, when $x\in (0,\infty )$ and $y\in (0,x/2)\cup (2x,\infty)$,
\begin{align*}
    & \Big\|t^\ell\partial_t^\ell W_t^{\mathcal{L}_\alpha }(x,y)\Big\|_H+\Big\|t^\ell\partial_t^\ell W_t^{\mathcal{H}/2}(x,y)\Big\|_H
        \leq C\left(1+\frac{xy}{|x-y|^2}\right)\left(\int_0^\infty \frac{e^{-c|x-y|^2/t}}{t^2}dt\right)^{1/2}\\
    & \qquad \qquad \leq C\left(\int_0^\infty \frac{e^{-c|x-y|^2/t}}{t^2}dt\right)^{1/2}\leq C\frac{1}{|x-y|}
    \leq C\left\{\begin{array}{ll}
            \displaystyle \frac{1}{x},&\displaystyle 0<y<\frac{x}{2}\\
            &\\
            \displaystyle \frac{1}{y},&\displaystyle 0<2x<y
            \end{array}
    ,\right.
\end{align*}
because $|x-y|\sim x$, when $y\in (0,x/2)$ and $|x-y|\sim y$, when $y\in (2x,\infty)$.

Hence,
\begin{equation}\label{T1}
    T_1(f)(x)
        \leq C \left[H_0(\|f\|_\B))(x)+H_\infty (\|f\|_\B)(x)\right]<\infty ,\quad x\in (0,\infty ),
\end{equation}
where $H_0$ and $H_\infty$ represents the classical Hardy operators given by
$$H_0(g)(x)
    =\frac{1}{x}\int_0^xg(y)dy\qquad \mbox{ and }\qquad H_\infty (g)(x)=\int_x^\infty \frac{g(y)}{y}dy,\quad x\in (0,\infty ).$$

On the other hand,  by taking into account \eqref{B3bisH}, \eqref{B3bis} and \eqref{B4}, we can write,
\begin{align*}
    & \Big\|t^\ell\partial_t^\ell [W_t^{\mathcal{L}_\alpha }(x,y)-W_t^{\mathcal{H}/2}(x,y)]\Big\|_H
         \leq \left\{\left(\int_{0, \ \xi\leq 1}^\infty +\int_{0, \ \xi\geq 1}^\infty \right)\left|t^\ell\partial_t^\ell [W_t^{\mathcal{L}_\alpha }(x,y)-W_t^{\mathcal{H}/2}(x,y)]\right|^2\frac{dt}{t}\right\}^{1/2}\\
    & \qquad \qquad \leq C\left(\int_0^\infty \frac{e^{-c(x^2+y^2)/t}}{t^2}dt+\frac{1}{(xy)^{1/2}}\int_0^\infty \frac{e^{-c|x-y|^2/t}}{t^{3/2}}dt\right)^{1/2}\\
    & \qquad \qquad \leq C\left(\frac{1}{\sqrt{x^2+y^2}}+\frac{1}{(xy)^{1/4}\sqrt{|x-y|}}\right)\leq \frac{C}{y}\left(1+\sqrt{\frac{y}{|x-y|}}\right),
    \quad 0<\frac{x}{2}<y<2x.
\end{align*}
Hence,
\begin{equation}\label{T2}
    T_2(f)(x)\leq C\mathcal{N}(\|f\|_\mathbb{B})(x)<\infty ,\quad x\in (0,\infty ),
\end{equation}
where
$$\mathcal{N}(g)(x)
    =\int_{x/2}^{2x}\frac{1}{y}\left(1+\sqrt{\frac{y}{|x-y|}}\right)g(y)dy,\quad x\in (0,\infty ).$$

From \eqref{gdif}, \eqref{T1} and \eqref{T2} we conclude that, for every $x\in (0,\infty )$,
$$\|\mathcal{G}_{\mathcal{L}_\alpha ,\mathbb{B}}^\ell (f)(x,\cdot )-\mathcal{G}_{\mathcal{H}/2 ,\mathbb{B}}^\ell(\tilde{f})(x,\cdot )\|_{\gamma (H,\mathbb{B})}
    \leq C \left[H_0(\|f\|_\mathbb{B})(x)+H_\infty (\|f\|_\mathbb{B})(x)+\mathcal{N}(\|f\|_\mathbb{B})(x)\right]
    <\infty.$$
It is well-known that the Hardy operators $H_0$ and $H_\infty$ are bounded from $L^p(0,\infty )$ into itself
(see \cite[p. 244, (9.9.1) and (9.9.2)]{HLP}).
Moreover, Jensen's inequality allows us to
show that the operator $\mathcal{N}$ is also bounded from $L^p(0,\infty )$ into itself. Hence,
$$\|\mathcal{G}_{\mathcal{L}_\alpha ,\mathbb{B}}^\ell (f)-\mathcal{G}_{\mathcal{H}/2 ,\mathbb{B}}^\ell(\tilde{f})\|_{L^p((0,\infty ), \gamma (H,\mathbb{B})}
    \leq C\|f\|_{L^p((0,\infty ),\mathbb{B})}.$$
Since, as it was seen in Section~\ref{sec:Hermite}, $(b)$ holds provided that $\mathbb{B}$ is UMD, it follows that
$$\|\mathcal{G}_{\mathcal{L}_\alpha ,\mathbb{B}}^\ell (f)\|_{L^p((0,\infty ), \gamma (H,\mathbb{B})}
    \leq C\|f\|_{L^p((0,\infty ),\mathbb{B})}.$$

Note that we can obtain results analogous to Propositions~\ref{polarization} and \ref{G1G2} for the operator $\mathcal{L}_\alpha$
instead of $\mathcal{H}$. The remainder of the proof of $(a)\Rightarrow (d)$ and the proof of $(d)\Rightarrow (a)$ can be made by proceeding as
in the proof of the corresponding properties in Section~\ref{sec:Hermite}.



\begin{thebibliography}{10}

\bibitem{BCCFR}
{\sc J.~J. Betancor, A.~J. Castro, J.~Curbelo, J.~C. Fari{\~n}a, and
  L.~Rodr{\'i}guez-Mesa}, {\em Square functions in the {H}ermite setting for
  functions with values in {UMD} spaces}.
\newblock Preprint 2012
  \href{http://arxiv.org/abs/1203.1480}{\texttt{(arXiv:1203.1480v1)}}.

\bibitem{BCCR}
{\sc J.~J. Betancor, A.~J. Castro, J.~Curbelo, and L.~Rodr{\'i}guez-Mesa}, {\em
  Characterization of {UMD} {B}anach spaces by imaginary powers of {H}ermite
  and {L}aguerre operators}.
\newblock To appear in Complex Anal. Oper. Theory
  \href{http://www.springerlink.com/content/b2l75323g6x12770/}{\texttt{(DOI:
  10.1007/s11785-011-0203-9)}}.

\bibitem{BCFR}
{\sc J.~J. Betancor, R.~Crescimbeni, J.~C. Fari{\~n}a, and
  L.~Rodr{\'i}guez-Mesa}, {\em Multipliers and imaginary powers of the
  {S}chr\"odinger operators characterizing {UMD} {B}anach spaces}.
\newblock Preprint 2011
  \href{http://arxiv.org/abs/1109.0429}{\texttt{(arXiv:1109.0429v1)}}.

\bibitem{BCFST}
{\sc J.~J. Betancor, R.~Crescimbeni, J.~C. Fari{\~n}a, P.~R. Stinga, and J.~L.
  Torrea}, {\em A {$T1$} criterion for {H}ermite-{C}alder\'on-{Z}ygmund
  operators on the {$BMO_H(\mathbb{R}^n)$} space and applications}.
\newblock To appear in Ann. Sc. Norm. Super. Pisa Cl. Sci. (5)
  \href{http://arxiv.org/abs/1006.0416}{\texttt{(arXiv:1006.0416v2)}}.

\bibitem{BMR} 
{\sc J.~J. Betancor, S.~M. Molina, and
  L.~Rodr{\'i}guez-Mesa}, {\em Area Littlewood-Paley functions associated with Hermite and Laguerre operators}, Potential Anal., 34 (2011),  pp.~345--369.

\bibitem{Bou}
{\sc J.~Bourgain}, {\em Some remarks on {B}anach spaces in which martingale
  difference sequences are unconditional}, Ark. Mat., 21 (1983), pp.~163--168.

\bibitem{Bu3}
{\sc D.~L. Burkholder}, {\em Martingales and {F}ourier analysis in {B}anach
  spaces}, in Probability and Analysis ({V}arenna, 1985), vol.~1206 of Lecture
  Notes in Math., Springer, Berlin, 1986, pp.~61--108.

\bibitem{ChFS}
{\sc C.-P Chen, D. Fan, and S. Sato}, {\em DeLeeuw's theorem on Littlewood-Paley functions}, Nagoya Math. J., 166
  (2001), pp.~23--42.

\bibitem{DGMTZ}
{\sc J.~Dziuba{\'n}ski, G.~Garrig{\'o}s, T.~Mart{\'i}nez, J.~L. Torrea, and
  J.~Zienkiewicz}, {\em {$BMO$} spaces related to {S}chr\"odinger operators
  with potentials satisfying a reverse {H}\"older inequality}, Math. Z., 249
  (2005), pp.~329--356.

\bibitem{DZ1}
{\sc J.~Dziuba{\'n}ski and J.~Zienkiewicz}, {\em Hardy space {$H^1$} associated
  to {S}chr\"odinger operator with potential satisfying reverse {H}\"older
  inequality}, Rev. Mat. Iberoamericana, 15 (1999), pp.~279--296.

\bibitem{DZ2}
\leavevmode\vrule height 2pt depth -1.6pt width 23pt, {\em {$H^p$} spaces for
  {S}chr\"odinger operators}, in Fourier analysis and related topics
  ({B}edlewo, 2000), vol.~56 of Banach Center Publ., Polish Acad. Sci., Warsaw,
  2002, pp.~45--53.

\bibitem{DZ3}
\leavevmode\vrule height 2pt depth -1.6pt width 23pt, {\em {$H^p$} spaces associated with 
  {S}chr\"odinger operators with potentials from reverse H\"older inequality}, Colloq. Math., 98 (2003), pp.~5--38.

\bibitem{EMOT}
{\sc A.~Erd{\'e}lyi, W.~Magnus, F.~Oberhettinger, and F.~G. Tricomi}, {\em
  Tables of integral transforms. {V}ol. {I}}, McGraw-Hill Book Company, Inc.,
  New York-Toronto-London, 1954.

\bibitem{FS}
{\sc D. Fan and S. Sato}, {\em Remarks on Littlewood-Paley functions and singular integrals}, J. Math. Soc. Japan, 54
  (2002), pp.~565--585.

\bibitem{Foll}
{\sc G.~B. Folland}, {\em Real analysis. {M}odern techniques and their
  applications}, Pure and Applied Mathematics (New York), John Wiley \& Sons
  Inc., New York, second~ed., 1999.

\bibitem{GLY}
{\sc L.~Grafakos, L.~Liu, and D.~Yang}, {\em Vector-valued singular integrals
  and maximal functions on spaces of homogeneous type}, Math. Scand., 104
  (2009), pp.~296--310.

\bibitem{HTV}
{\sc E.~Harboure, J.~L. Torrea, and B.~Viviani}, {\em Vector-valued extensions
  of operators related to the {O}rnstein-{U}hlenbeck semigroup}, J. Anal.
  Math., 91 (2003), pp.~1--29.

\bibitem{HLP}
{\sc G.~H. Hardy, J.~E. Littlewood, and G.~P{\'o}lya}, {\em Inequalities},
  Cambridge, at the University Press, 1934.

\bibitem{Hy}
{\sc T.~Hyt{\"o}nen}, {\em Littlewood-{P}aley-{S}tein theory for semigroups in
  {UMD} spaces}, Rev. Mat. Iberoam., 23 (2007), pp.~973--1009.

\bibitem{HNP}
{\sc T.~Hyt{\"o}nen, J.~van Neerven, and P.~Portal}, {\em Conical square
  function estimates in {UMD} {B}anach spaces and applications to
  {$H^\infty$}-functional calculi}, J. Anal. Math., 106 (2008), pp.~317--351.

\bibitem{HW}
{\sc T.~Hyt{\"o}nen and L.~Weis}, {\em The {B}anach space-valued {BMO},
  {C}arleson's condition, and paraproducts}, J. Fourier Anal. Appl., 16 (2010),
  pp.~495--513.

\bibitem{K}
{\sc C.~Kaiser}, {\em Wavelet transforms for functions with values in
  {L}ebesgue spaces}, in Wavelets XI, vol.~5914 of Proc. of SPIE, Bellingham,
  WA, 2005.

\bibitem{KW1}
{\sc C.~Kaiser and L.~Weis}, {\em Wavelet transform for functions with values
  in {UMD} spaces}, Studia Math., 186 (2008), pp.~101--126.

\bibitem{Leb}
{\sc N.~N. Lebedev}, {\em Special functions and their applications}, Dover
  Publications Inc., New York, 1972.

\bibitem{MTX}
{\sc T.~Mart{\'i}nez, J.~L. Torrea, and Q.~Xu}, {\em Vector-valued
  {L}ittlewood-{P}aley-{S}tein theory for semigroups}, Adv. Math., 203 (2006),
  pp.~430--475.

\bibitem{Me}
{\sc S.~Meda}, {\em A general multiplier theorem}, Proc. Amer. Math. Soc., 110
  (1990), pp.~2201--2212.

\bibitem{Rub}
{\sc J.~L. Rubio~de Francia}, {\em Martingale and integral transforms of
  {B}anach space valued functions}, in Probability and {B}anach spaces
  ({Z}aragoza, 1985), vol.~1221 of Lecture Notes in Math., Springer, Berlin,
  1986, pp.~195--222.

\bibitem{Sh}
{\sc Z.~W. Shen}, {\em {$L^p$} estimates for {S}chr\"odinger operators with
  certain potentials}, Ann. Inst. Fourier (Grenoble), 45 (1995), pp.~513--546.

\bibitem{Ste1}
{\sc E.~M. Stein}, {\em Topics in harmonic analysis related to the
  {L}ittlewood-{P}aley theory}, Annals of Mathematics Studies, No. 63,
  Princeton University Press, Princeton, N.J., 1970.

\bibitem{Ste2}
\leavevmode\vrule height 2pt depth -1.6pt width 23pt, {\em Harmonic analysis:
  real-variable methods, orthogonality, and oscillatory integrals}, vol.~43 of
  Princeton Mathematical Series, Princeton University Press, Princeton, NJ,
  1993.

\bibitem{Sz}
{\sc G.~Szeg{\H{o}}}, {\em Orthogonal polynomials}, American Mathematical
  Society, Providence, R.I., fourth~ed., 1975.
\newblock American Mathematical Society, Colloquium Publications, Vol. XXIII.

\bibitem{Tang}
{\sc L.~Tang}, {\em Weighted norm inequalities, spectral multipliers and Littlewood-Paley operators in the Schr\"odinger settings}.
\newblock Preprint 2012
  \href{http://arxiv.org/abs/1203.0375}{\texttt{(arXiv:1203.0375v1)}}.

\bibitem{Th}
{\sc S.~Thangavelu}, {\em Lectures on {H}ermite and {L}aguerre expansions},
  vol.~42 of Mathematical Notes, Princeton University Press, Princeton, NJ,
  1993.

\bibitem{TZ}
{\sc J.~L. Torrea and C.~Zhang}, {\em Fractional vector-valued
  {L}ittlewood-{P}aley-{S}tein theory for semigroups}.
\newblock Preprint 2011
  \href{http://arxiv.org/abs/1105.6022v3}{\texttt{(arXiv:1105.6022v3)}}.

\bibitem{Nee}
{\sc J.~van Neerven}, {\em {$\gamma$}-radonifying operators---a survey}, in The
  {AMSI}-{ANU} {W}orkshop on {S}pectral {T}heory and {H}armonic {A}nalysis,
  vol.~44 of Proc. Centre Math. Appl. Austral. Nat. Univ., Canberra, 2010,
  pp.~1--61.

\bibitem{NVW}
{\sc J.~van Neerven, M.~C. Veraar, and L.~Weis}, {\em Stochastic integration in
  {UMD} {B}anach spaces}, Ann. Probab., 35 (2007), pp.~1438--1478.

\bibitem{NeWe}
{\sc J.~van Neerven and L.~Weis}, {\em Stochastic integration of functions with
  values in a {B}anach space}, Studia Math., 166 (2005), pp.~131--170.

\bibitem{Wrob}
{\sc B.~Wr\'obel}, {\em On $g$-functions for laguerre function expansions of Hermite type}, Proc. Indian Acad. Sci. (Math. Sci.), 121 (2011), pp.~45--75.

\bibitem{Xu}
{\sc Q.~Xu}, {\em Littlewood-{P}aley theory for functions with values in
  uniformly convex spaces}, J. Reine Angew. Math., 504 (1998), pp.~195--226.

\end{thebibliography}

\end{document}